\DeclareMathOperator*{\argmin}{arg\,min}
\DeclareMathOperator*{\pc}{\mathbf{PC}}
\DeclareMathOperator*{\geo}{\mathbf{Geo}_{\tau}}
\DeclareMathOperator{\spt}{spt}
\DeclareMathOperator{\hess}{Hess}
\DeclareMathOperator{\lip}{Lip}
\theoremstyle{thmstyleone}%
\newtheorem{theorem}{Theorem}
\newtheorem{proposition}[theorem]{Proposition}%
\newtheorem{lemma}[theorem]{Lemma}
\newtheorem{corollary}[theorem]{Corollary}
\theoremstyle{thmstyletwo}%
\newtheorem{remark}{Remark}%
\theoremstyle{thmstylethree}%
\newtheorem{definition}{Definition}%
\begin{document}

\title[Aggregation equation via $d_2$-gradient flows]{Minimizing movement scheme for intrinsic aggregation on compact Riemannian manifolds }

\author{\fnm{Joaquín Sánchez García}}\email{joaqsan@math.utoronto.com}


\abstract{Recently solutions to the aggregation equation on compact Riemannian Manifolds have been studied with different techniques. This work demonstrates the small time existence of measure-valued solutions for suitably regular intrinsic potentials. The main tool is the use of the minimizing movement scheme which together with the optimality conditions yield a finite speed of propagation. The main technical difficulty is non-differentiability of the potential in the cut locus which is resolved via the propagation properties of geodesic interpolations of the minimizing movement scheme and passes to the limit as the time step goes to zero.}

\keywords{minimizing movement scheme, aggregation, sphere, compact manifolds, Wasserstein flow, intrinsic interaction potential}

\maketitle
\tableofcontents
\markboth{}{}
\section{Introduction}\label{sec1}

We consider an aggregation equation on a smooth, connected, compact Riemannian manifold $(M,g)$ in which the evolution of the density $\mu_t$ of population (or particles) is described by 
\begin{equation} \label{aggregation}
 \partial_t \mu_t - \nabla_M \cdot ((\nabla_M (W \ast \mu_t))\mu_t ) = 0
\end{equation}
where $W \ast \mu_t(x) = \int_{M} W(x,y) d\mu_t (y) $ is the convolution operator for a potential function $W: M \times M \to \mathbb{R}$. Further,  we assume that the interaction happens only through intrinsic distance, i.e. $W(x,y) = h(d(x,y))$ where $h: \mathbb{R} \to \mathbb{R}$ is twice continuously differentiable and $d(x,y)$ denotes the Riemannian distance between $x$ and $y$ and is non-decreasing for big enough distances. \\
One of the main problems of models like \eqref{aggregation} is the possible non-differentiability of potentials on the cut locus, there the Riemannian distance fails to be differentiable. To overcome the failure of differentiability of the interaction potential we first prove that the speed of propagation of the minimizing movement scheme is finite (Proposition \ref{finitespeed}), which intuitively says the spread of the particles is slow enough to not instantly fall into the cut locus. \\
Model \eqref{aggregation} has received a lot of attention in recent years. A very similar model has been considered in \citep{Slepcev} where the authors used a related technique motivated by the minimizing movement scheme but depend on the euclidean distance of the ambient space into which the manifold is embedded. The idea of using a generalization of the projection onto the tangent space allows the authors to overcome some difficulties by using the information from the Euclidean setting and also allowing them to conclude stability of the model. \\
Another approach can be found in \citep{Fetecau} and \citep{FetecauPatacchini}, here the authors solve the existence and uniqueness question for weak solutions of model \eqref{aggregation} using the theory of partial differential equations and Lipschitz-coefficients theorems. This approach is very fruitful and motivated the work presented here. In contrast we aim to obtain similar conclusions by only looking at the measures involved. Although both approaches are completely intrinsic (in the sense that they depend only on Riemannian distance) the goal of this document is to adapt the theory of Wasserstein gradient flows as presented in \citep{Ambrosio}. The main difficulty is that the presence of the cut locus stops us from applying directly the theory of Wasserstein Gradient flows as the interaction can not be shown to be globally $\lambda$-convex. We overcome this problem by forcing a regularizing Wasserstein term and noting that optimality conditions control the distance of transport for every timestep $\tau$. \\ An introductory treatment of the Wasserstein gradient flow for the interaction energy on $\mathbb{R}^n$ can be found in Chapter 8 of \citep{Villani}. \\
In the $\mathbb{R}^n$ case more work has been done. Recently, for the case of interaction potentials that can be written as power laws, it was shown in \citep{McCannCameron} that $d_{\infty}$-local minimizers for the mildly repulsive regime concentrate on  a $n$-simplex. Another notable result in the $\mathbb{R}^n$ case is \citep{FigalliCarrillo}, there it was shown that for specific interaction potentials concentration can be immediate. \\
Our approach is specially interesting for applications, as it leads the way to different types of numerical algorithms using the algorithms for optimal transportation (see \citep{CuturiPeyre}). This could open a new line of investigation to compare the efficiency of algorithms derived from PDE-approximation methods against optimal transport based methods. \\
To summarize the paper, we prove that for small times, solutions of the aggregation equation \eqref{aggregation} exist whenever the interaction is intrinsic and satisfies \ref{h0}-\ref{hisincreasing}. To this end we use the minimizing movement scheme in which the Euler-Lagrange conditions for optimality allow us to upgrade the regularity properties of Kantorovich potentials (in the support of the measures) via non-smooth analysis. This analysis shows that the minimizing movement scheme does not immediately move to the cut locus from which one can deduce several properties of the limiting measure. 

\section{Preliminaries and precise formulation of the problem}
Let $(M,g)$ be a smooth, connected, compact $n$-dimensional Riemannian manifold. Let $(x^1,x^2, \cdots, x^n)$ be local coordinates, we denote by $T_pM$ the tangent space at $p \in M$ and let $g_{ij}$ be the metric coordinates. For the canonical basis $ \{ \frac{\partial}{\partial x^1},\frac{\partial}{\partial x^2}, \cdots , \frac{\partial}{\partial x^n} \}$ the gradient of a scalar function on $M$ is given by 
$$ \langle \nabla_M f(p), v \rangle_p = df_p(v) $$
for every $v \in T_pM$, where $df_p$ is the differential of $f$ at $p \in M$ and $\langle \cdot , \cdot \rangle_p$ denotes the inner product in $T_pM$. Hence, by using the local coordinates, 
$$ \nabla_M f = g^{ij} \frac{\partial f}{\partial x^i} \frac{\partial}{\partial x^j}. $$ 
For a tangent vector field $F = F^i \frac{\partial}{\partial x^i}$, we define the divergence,
$$ div(F) = \frac{1}{\sqrt{\det g}} \sum_{i=1}^n \frac{\partial }{\partial x^i} \left( \sqrt{\det g} F^i  \right).$$
If $f:M \to \mathbb{R}$ is differentiable, we define the Hessian, $\hess f$ of $f$ at $p \in M$ as the linear operator
 $ \hess f:T_pM \to T_pM $ via the formula 
 $$ \hess f (Y) = \nabla_{Y} (\nabla_M f) $$ 
 for $ Y \in T_pM$ and $\nabla_Y$ denoting the covariant derivative along $Y$, see \citep{DoCarmo}.
The standard volume in local coordinates is
$$ dvol =  \sqrt{\det g} dx^1 \wedge dx^2 \wedge \cdots  \wedge dx^n. $$
For given $p \in M$ the cut locus at $p$, denoted $Cut(p)$ denotes the set of points on $M$ that can not be linked to $p$ by any extendable geodesic. The Cut locus is the subset of $M \times M = \{ (x,y) \in M \times M : y \in Cut(x) \}$. \\
For $p \in [1,\infty)$ denote by $\mathcal{P}_p(M)$ the set of probability measures with $p$-finite moment, and $\mathcal{P}_{ac}^p(M)$ the subset of $\mathcal{P}_p(M)$ of measures absolutely continuous with respect to $dvol$. For $\mu,\nu \in \mathcal{P}_p(M)$ we define the Wasserstein-$p$ metric via
\begin{equation}\label{WassersteinMetric}
 d_p (\mu, \nu) = \left( \inf_{\pi \in \Pi(\mu,\nu)} \int_{M \times M} d(x,y)^p d\pi(x,y) \right)^{1/p},
 \end{equation}
where $\Pi(\mu,\nu)$ denotes the set of probability measures on $M\times M$ whose marginals are $\mu$ and $\nu$ respectively and again $d(x,y)$ denotes the Riemannian distance between $x$ and $y$. Recall:
\begin{theorem}(Optimal Transportation on Riemannian manifolds) \label{McCann} \\ In a smooth, connected Riemannian manifold, if $\mu,\nu$ are compactly supported measures on $M$ and $\mu$ is absolutely continuous with respect to Riemannian volume, then considering the cost function $d(x,y)^2/2$, there exists an optimal transport map $T$, transporting $\mu$ onto $\nu$ determined uniquely $\mu$ a.e. by 
$$ T(x) = \exp_x(-\nabla \phi (x)),$$
where $\phi$ is some $d^2/2$-concave function. 
\end{theorem}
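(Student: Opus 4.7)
The plan is to follow the Brenier–McCann strategy adapted to the Riemannian setting: reduce the Monge problem to Kantorovich duality, extract a $c$-concave potential with $c=d^2/2$, and then use regularity properties of $c$-concave functions together with absolute continuity of $\mu$ to read off the map from the exponential.

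First I would invoke Kantorovich duality: since $M$ is compact (or the measures are compactly supported), the cost $c(x,y)=d(x,y)^2/2$ is continuous and bounded on $\mathrm{spt}(\mu)\times\mathrm{spt}(\nu)$, so the dual problem admits a maximizing pair $(\phi,\phi^c)$ with $\phi$ a $c$-concave function and $\phi^c$ its $c$-transform. Standard duality then gives that any optimal plan $\pi$ is concentrated on the $c$-superdifferential $\partial^c\phi=\{(x,y):\phi(x)+\phi^c(y)=c(x,y)\}$. Existence of $\pi$ itself follows from weak-$*$ compactness of $\Pi(\mu,\nu)$ and lower semicontinuity of the cost functional.

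Next I would prove the key regularity ingredient: $c$-concave functions with cost $d^2/2$ on a Riemannian manifold are locally semiconcave. This uses the fact that $x\mapsto \tfrac{1}{2}d(x,y)^2$ is locally semiconcave uniformly in $y$ (a direct consequence of the first and second variation formulas for geodesics, and the elementary Euclidean inequality applied in normal coordinates), and $c$-concave functions are infima of such semiconcave functions plus constants. Semiconcavity then yields that $\phi$ is locally Lipschitz, differentiable Lebesgue-a.e.\ on $M$, hence differentiable $\mu$-a.e.\ because $\mu\ll \mathrm{dvol}$.

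At a point $x$ where $\phi$ is differentiable and $(x,y)\in \partial^c\phi$, I would show that necessarily $y=\exp_x(-\nabla\phi(x))$. The argument is to fix $y$ and look at the function $x'\mapsto c(x',y)-\phi(x')$, which attains its minimum at $x$; writing this condition through the first variation of $d(\cdot,y)^2/2$, one gets $\nabla\phi(x)\in \partial^-_x\bigl(\tfrac{1}{2}d(\cdot,y)^2\bigr)$, and at points where $y\notin\mathrm{Cut}(x)$ this subdifferential is the singleton $-\exp_x^{-1}(y)$, giving $T(x):=\exp_x(-\nabla\phi(x))$. The potential cut-locus issue must be handled: one shows that $y\in \mathrm{Cut}(x)$ with $(x,y)\in \partial^c\phi$ forces $\phi$ to fail differentiability at $x$ (because $d(\cdot,y)^2/2$ fails to be $C^1$ and its superdifferential is strictly larger than any candidate gradient), a negligible set for $\mu$. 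Pushing $\mu$ forward by $T$ gives $T_\#\mu=\nu$ since the disintegration of $\pi$ along $\mu$ collapses to a delta mass $\mu$-a.e., and uniqueness follows because any other optimal map would coincide with $T$ at all $\mu$-points of differentiability of $\phi$.

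The main obstacle is the cut locus: the cost $d^2/2$ is only locally semiconcave and only $C^\infty$ off the cut locus, so one must argue carefully that $\mu$-almost every $x\in\mathrm{spt}(\mu)$ lies in the set where $\phi$ is differentiable \emph{and} the transport target avoids $\mathrm{Cut}(x)$. This is precisely where absolute continuity of $\mu$ is used in an essential way, and where one has to invoke Rademacher-type results for semiconcave functions on manifolds instead of the cleaner Euclidean Aleksandrov theorem.
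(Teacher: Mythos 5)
The paper does not prove this statement; it cites McCann's polar factorization theorem and omits the argument as standard. Your sketch is precisely that standard proof (Kantorovich duality, semiconcavity of $c$-concave potentials, $\mu$-a.e.\ differentiability via absolute continuity, and the cut-locus exclusion at points of differentiability), so it is correct and follows essentially the same route as the cited reference.
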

The proof can be found in \cite{McCann} and it is very standard in optimal transport literature, hence omitted. Another important result for us is the characterization of the $d_1$ metric:
\begin{theorem}(Kantorovich Rubinstein) \\
In a smooth, connected Riemannian manifold $M$, let $\mu, \nu \in \mathcal{P}_1(M)$ then 
\begin{equation}
    d_1(\mu,\nu) = \sup_{\lvert \lvert \psi \rvert \rvert_{Lip} \leq 1} \left\{ \int \psi d(\mu - \nu) \right\},
\end{equation}
where $\lvert \lvert \psi \rvert \rvert_{Lip}$ denotes the Lipschitz constant of $\psi$.
\end{theorem}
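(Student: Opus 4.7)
The plan is to deduce the statement as the classical specialization of Kantorovich duality to the cost $c(x,y) = d(x,y)$. The starting point is the general duality formula
\[
d_1(\mu,\nu) = \sup\left\{\int \phi\,d\mu + \int \psi\,d\nu : \phi(x)+\psi(y) \leq d(x,y)\right\},
\]
where the supremum runs over pairs of bounded continuous (or $\mu,\nu$-integrable) functions satisfying the pointwise constraint. Since the cost $d$ is continuous, symmetric, and satisfies the triangle inequality on the Polish space $(M,d)$, the general duality theorem applies verbatim with no modification needed for the Riemannian setting.

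Next, I would reduce the supremum to pairs of the form $(\phi,-\phi)$ with $\phi$ being $1$-Lipschitz. Given any admissible pair $(\phi,\psi)$, I replace $\psi$ by its $c$-transform $\psi^{c}(y) := \inf_{x}\bigl(d(x,y) - \phi(x)\bigr)$, which only enlarges the dual objective; the triangle inequality $d(x,y) \leq d(x,y') + d(y',y)$ immediately gives $\lvert\psi^{c}(y) - \psi^{c}(y')\rvert \leq d(y,y')$, i.e.\ $\lvert\lvert \psi^c \rvert\rvert_{Lip} \leq 1$. Iterating once more and setting $\tilde\phi := (\psi^c)^{c}$ yields a $1$-Lipschitz function with $\tilde\phi(x) + \psi^c(y) \leq d(x,y)$ whose integral against $\mu$ is at least as large as that of $\phi$. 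For such a $\tilde\phi$, setting $x = y$ in the constraint forces $\psi^c(x) \leq -\tilde\phi(x)$, while the Lipschitz bound $\tilde\phi(x) - \tilde\phi(y) \leq d(x,y)$ shows that the pair $(\tilde\phi,-\tilde\phi)$ is itself admissible; hence one may replace $\psi^c$ by $-\tilde\phi$ without loss.

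Putting the two reductions together gives exactly
\[
d_1(\mu,\nu) = \sup_{\lvert\lvert\psi\rvert\rvert_{Lip}\leq 1} \int \psi\, d(\mu-\nu),
\]
with $\psi = \tilde\phi$. The main technical point is the integrability of unbounded $1$-Lipschitz potentials against $\mu,\nu \in \mathcal{P}_1(M)$, which is handled by the estimate $\lvert \psi(x)\rvert \leq \lvert \psi(x_0)\rvert + d(x,x_0)$ together with the finite first-moment hypothesis, so that all integrals are well-defined and the duality gap closes. I expect the main obstacle to be the double $c$-transform argument establishing $1$-Lipschitz regularity, but this is completely standard once general Kantorovich duality is granted, and in the compact manifold setting of the paper even the integrability issue is trivial.
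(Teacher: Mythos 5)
Your proof is correct: it is the standard specialization of Kantorovich duality to the cost $c=d$, with the double $c$-transform reduction to pairs $(\tilde\phi,-\tilde\phi)$, $\tilde\phi$ being $1$-Lipschitz, and the integrability of Lipschitz potentials against measures with finite first moment handled as you indicate (trivially so on the compact $M$ of the paper). The paper itself gives no proof and simply cites Villani (Theorem 5.10, particular case 5.16), whose argument is exactly the one you reproduce, so there is nothing to compare beyond noting that your write-up fills in the omitted standard details.
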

See for example \citep{VillaniOldAndNew} Theorem 5.10 particular case 5.16. \\
Finally, we recall that on a compact space the Wasserstein $p$-metrics are ordered, if $ p_1 \geq p_2 $ then $d_{p_2}(\mu,\nu) \leq d_{p_1}(\mu,\nu)$ whenever $\mu,\nu \in \mathcal{P}_{p_2}(M)$, see \citep{Villani} Chapter 7.
We start setting up the problem by defining what we mean by a solution. We are interested in measure-valued solutions to the aggregation model. 
\begin{definition}(Measure-valued solutions) \\
Given $T \in [0,\infty)$ we say that $\{\mu_t\}_{t \in [0,T]}$ is a measure-valued solution to the aggregation model \eqref{aggregation} with potential function $W: M \times M \to \mathbb{R}$ if for every test function, $\phi \in C_{c}^{\infty}((0,T) \times M )$ we have
\begin{equation} \label{measure-sol}
 \int_0^T \int_{M} \partial_t \phi(t,x)  + \langle \nabla_M \phi(t,x), \nabla_M (W \ast \mu_t )(x) \rangle_x  d\mu_t(x)  dt = 0,
\end{equation}
where $C_c^{\infty}$ denotes smooth functions with compact support. 
\end{definition}
\subsection{Assumptions on the potential}\label{assumptions}
Equation \eqref{aggregation} may have no solutions if the potential function is not appropriate.
Given that our goal is to study measure-valued solutions of the equation, we are going to use the energy $E_W$ associated to a potential function $W: M \times M \to \mathbb{R}$ given by \begin{equation} \label{defEW}
E_W(\rho) = \frac{1}{2} \int_M \int_M W(x,y) d\rho(x) d\rho(y).
\end{equation}
We first assume that the interaction depends only through the the Riemannian distance between points so

\begin{enumerate}[start=0,label={(\bfseries W\arabic*)}]
\item \label{h0} Without loss of generality, we assume $h(0) = 0.$ 
\item \label{Wishd} $W(x,y) = h(d(x,y)^2)$ where  $h: [0,\infty) \to \mathbb{R}$ is a twice continuously differentiable on $(0,diam(M)) $ function, further assume $h'(0)$ exists (as the limit from the right).
Namely, we assume $ h \in C^{1}([0,diam(M)]) \cap C_{loc}^2((0,diam(M))$.
\item \label{hisincreasing} There exists $ 0 < r_h < diam(M) $ such that $h$ is non-decreasing on $[r_h,diam(M))$.
\end{enumerate}
\begin{remark}\label{WLip} 
Note that by triangle inequality $ x \to d(x,p)$ is a Lipschitz function with constant $1$ for all $p \in M$. Because $h' $ is continuous on $[0,diam(M)]$, which is compact $h$ is Lipschitz and hence, for every fixed $y$ the interaction potential $ W_y(x) = W(x,y) = h(d(x,y)^2)$ is also Lipschitz (as a function of $x$). Denoting by $\lip$ the Lipschitz constant, for every $y \in M$, the potential $W$ satisfies $\lip(W_y) \leq 2 \lip(h)  diam(M)$. For this reason we  definee  $L:=2 \lip(h)  diam(M) $.\\
Further, Rademacher's Theorem ensures that for every $y \in M$ the function $W_y$ is differentiable $dvol$-almost everywhere. In general this conclusion won't be enough as some of the measures involved may not be absolutely continuous. 
\end{remark}

\subsection{Non-differentiability of the potential}\label{nondiffW}
In order to show that the limit of the minimizing movement scheme (\ref{minimizingmovements}) solves the aggregation equation, we will need the potential function to be differentiable. So far, our interaction potential with assumptions \ref{h0}-\ref{Wishd} fails to be differentiable at the cut locus. We will use the finite speed of propagation (Proposition \ref{finitespeed}) to ensure the potential is differentiable in the support of the measures involved in the minimizing movement scheme. \\
Note that assumptions \ref{h0}-\ref{Wishd} guarantee: \begin{itemize}
    \item The energy $E_W$ is proper ($ \{ \rho \in \mathcal{P}(M): E_{W}(\rho) < \infty \} \neq \emptyset $).
    \item The energy $E_W$ is lower semi-continuous.
\end{itemize}
\begin{remark}\label{Lowerboundk}
As a consequence of compactness of $M$, $W$ is bounded from below by a constant, that we will denote by $k$, that is 
$$ \inf_{(x,y) \in M \times M} W(x,y) \geq k. $$
\end{remark}
The approach is to use the so called ``Minimizing Movement Scheme'' from \cite{JKO},\cite{Ambrosio},\cite{DeGiorgi},\cite{ATW}.
The scheme consists in taking a time step of size $\tau > 0$ to balance the contribution of the original energy (in our case $E_W$) and a term that penalizes moving away from the previous configuration. The minimizer in this scheme approximates a step in the direction of steepest descent, getting more accurate as $\tau $ approaches $0$.  

\begin{definition}(Minimizing movement scheme on $(\mathcal{P}^2(M),d_2)$ for $E_W$)\\
Let $\mu_0 \in \mathcal{P}^2(M)$ be fixed, for $\tau > 0$ if it is possible to define a sequence $\{\mu_k^{\tau} \}$ of probability measures such that
\begin{equation}\label{minimizingmovements}
    \mu_{k+1}^{\tau} \in \argmin \left\{ E_{W}(\rho) + \frac{1}{2\tau} d_2^2(\rho,\mu_{k}^{\tau}) : \rho \in \mathcal{P}^2(M) \right\};
\end{equation}
we call $\{\mu_{k}^{\tau}\}$ a sequence of the minimizing movement scheme for $E_W$ at level $\tau$.
\end{definition}
\begin{remark}(Existence of minimizing movement scheme) \\
Note that the functional in \eqref{minimizingmovements} is lower semi-continuous with the assumptions \ref{h0}- \ref{Wishd}  it is easy to show that $\mu \to E_W(\mu)$ is lower semi-continuous and the distance to any given measure is lower semi-continuous by triangle inequality. Hence, the lower semi-continous functional on a compact set achieves a minimum yielding existence of a sequence $\{\mu_{k}^{\tau}\}_{k \in \mathbb{N}}$ for every $\tau > 0$. Existence of minimizers of the scheme does not ensure the model \eqref{aggregation} will be solved by any time interpolation, the rest of the work is dedicated to interpolating the measures in a continuous way and showing the limiting measure solves the aggregation equation. 
\end{remark}
\begin{remark} \label{interpolation}
Because our goal is to solve \eqref{measure-sol}, we need time interpolation of the sequences in the minimizing movement scheme. We denote by $\pc_{\tau}(\{\mu_k^{\tau}\}) (t)$ the piecewise constant interpolation such that $\pc_{\tau}(\{\mu_k^{\tau}\}) (t) = \mu_{k}^{\tau}$ if $ t \in [k\tau, (k+1)\tau) $. Finally, we define the geodesic interpolation by the following formula, for $ t \in [k\tau,(k+1)\tau)$

$$ \mathbf{Geo}_{\tau}(\{ \mu_{k}^{\tau} \}) = \exp_x \left( \left(\frac{(k+1)\tau - t}{\tau} \right) \nabla \phi_{k,k+1}^{c}  \right)_{\#} \mu_{k+1}^{\tau}, $$
where $\phi_{k,k+1}$ is the Kantorovich potential from $\mu_{k}^{\tau}$ to $\mu_{k+1}^{\tau}$ and $\phi_{k,k+1}^{c}(x)$ is the $c-$transform (or infimal convolution) of $\phi_{k,k+1}$ given by
$$ \phi_{k,k+1}^c(x)= \inf_{z \in M} \left\{ \frac{d(x,z)^2}{2} - \phi_{k,k+1}(z)\right\}.$$
\end{remark}
\begin{remark}
A function $f$ is called $c-$concave if it is not $-\infty$ and is the $c-$transform of another function. The fundamental theorem of optimal transport says that optimal plans in \eqref{WassersteinMetric} are supported in $c-$subdifferentials of $c-$concave functions, see \cite{Users} Theorem 1.13. \\
Given measures $\mu,\nu \in \mathcal{P}(M)$ it is not necessarily true that the Kantorovich potential $\phi_{\mu,\nu}$  from $\mu$ to $\nu$ is differentiable in $\spt(\mu)$. Some conditions are always necessary (e.g. absolute continuity of the source measure) to ensure differentiability. The optimality criterion (Euler-Lagrange) of Lemma \ref{optimality} will yield differentiability as we will see in Proposition \ref{phidifferentiable}. 
\end{remark}
\subsubsection{Square estimates on the Wasserstein norms}
\begin{theorem}\label{squareest} 
Let $\{ \mu_k^{\tau} \}$ be a minimizing movement scheme for $E_W$ as in \eqref{minimizingmovements}, i.e.  $\{ \mu_k^{\tau} \}$ satisfies
$$ \mu_{k+1}^{\tau} \in \argmin_{\mu \in \mathcal{P}^2(M)} \left\{ E_W(\mu) + \frac{d_2(\mu,\mu_k)^2}{2\tau} \right\}. $$ Then there exists a constant $C > 0$ independent of $\tau$ such that 
\begin{equation} \label{squareestimate}
 \sum_{k=0}^{\infty} \frac{d_2(\mu_k^{\tau},\mu_{k+1}^{\tau})^2}{\tau} \leq C.
\end{equation}
\end{theorem}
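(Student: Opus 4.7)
The plan is to exploit the minimality defining each $\mu_{k+1}^{\tau}$ by using $\mu_k^{\tau}$ itself as a competitor, and then telescope.

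First, since $\mu_{k+1}^{\tau}$ minimizes $\mu \mapsto E_W(\mu) + d_2(\mu,\mu_k^{\tau})^2/(2\tau)$ and $d_2(\mu_k^{\tau},\mu_k^{\tau}) = 0$, testing the minimization against $\mu = \mu_k^{\tau}$ yields
\begin{equation*}
E_W(\mu_{k+1}^{\tau}) + \frac{d_2(\mu_{k+1}^{\tau},\mu_k^{\tau})^2}{2\tau} \leq E_W(\mu_k^{\tau}),
\end{equation*}
which I rewrite as the one-step energy dissipation
\begin{equation*}
\frac{d_2(\mu_{k+1}^{\tau},\mu_k^{\tau})^2}{2\tau} \leq E_W(\mu_k^{\tau}) - E_W(\mu_{k+1}^{\tau}).
\end{equation*}

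Next I sum this inequality for $k=0,\dots,N$. The right-hand side telescopes to $E_W(\mu_0) - E_W(\mu_{N+1}^{\tau})$. To make this uniform in $N$ and $\tau$, I invoke the lower bound on $W$ from Remark \ref{Lowerboundk}: since $W(x,y) \geq k$ pointwise, every probability measure $\rho$ satisfies $E_W(\rho) \geq k/2$. Thus
\begin{equation*}
\sum_{k=0}^{N} \frac{d_2(\mu_{k+1}^{\tau},\mu_k^{\tau})^2}{2\tau} \leq E_W(\mu_0) - \frac{k}{2}.
\end{equation*}
Compactness of $M$ together with \ref{Wishd} also gives $E_W(\mu_0) < \infty$ (indeed $W$ is bounded on the compact set $M \times M$), so the right-hand side is a finite constant independent of $\tau$ and $N$. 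Letting $N \to \infty$ gives \eqref{squareestimate} with $C := 2 E_W(\mu_0) - k$.

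There is no real obstacle here; the only point that requires care is having a $\tau$-independent lower bound for $E_W$ so that the telescoping quantity does not drift to $-\infty$ as $N \to \infty$, and this is exactly what Remark \ref{Lowerboundk} provides via compactness of $M$. The estimate is the Riemannian-Wasserstein analogue of the classical De Giorgi / JKO energy-dissipation inequality, and the argument transfers verbatim since only the metric structure of $(\mathcal{P}^2(M),d_2)$ and the boundedness of $W$ are used.
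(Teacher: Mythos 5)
Your proof is correct and follows essentially the same route as the paper: test the minimization against the competitor $\mu_k^{\tau}$, telescope the resulting one-step dissipation inequality, and use the compactness-based lower bound on $W$ (Remark \ref{Lowerboundk}) to bound the telescoped sum uniformly in $\tau$ and $N$. Your version is in fact slightly more careful, since you correctly convert the pointwise bound $W \geq k$ into $E_W \geq k/2$ rather than quoting $k$ directly as a lower bound for $E_W$.
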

\begin{proof}
 The proof is standard and can be found in \citep{Villani}, presented here for completeness. Note that the optimality condition of $ \mu_{k+1}^{\tau}$ implies
$$ E_W(\mu_{k+1}^{\tau}) + \frac{d_2(\mu_{k}^{\tau},\mu_{k+1}^{\tau})^2}{2\tau} \leq E_W(\mu_k^{\tau}). $$ 
Hence, given that $E_W$ is proper, the sequence gives finite values for $E_W$ and so
$$   \frac{d_2(\mu_{k}^{\tau},\mu_{k+1}^{\tau})^2}{2\tau} \leq E_W(\mu_k^{\tau}) - E_W(\mu_{k+1}^{\tau}). $$
By summing all the terms, we get a telescopic sum on the right hand side, and the fact that $E_W$ is bounded from below by $k$ (Remark \ref{Lowerboundk}) gives
$$ \sum_{k=0}^{\infty} \frac{d_2(\mu_{k}^{\tau},\mu_{k+1}^{\tau})^2}{2\tau} \leq E_W(\mu_0) - k, $$
where $k$ is the lower bound of $E_W$ obtained by compactness (Remark \ref{Lowerboundk}), hence putting $C := 2(E_W(\mu_0)-k)$ gives the claim as it is finite and independent of $\tau$. 
\end{proof}

\section{Existence of solutions}\label{sec2}
We aim to analyze measure-valued solutions to \eqref{aggregation}. The main technical difficult is dealing with the fact that an interaction potential $W(x,y) = h(d(x,y)^2)$ may not be differentiable at the cut locus $Cut \subseteq M \times M$ (see section \ref{nondiffW}). The aggregation equation together with the minimizing movement scheme \eqref{minimizingmovements} will be shown to satisfy a finite-speed of propagation Proposition \ref{finitespeed}. This means that if we start with a probability measure concentrated away from the cut locus, we can apply the $d_2$-gradient flow method to generate solutions to the equations for small times. \\
As we are going to use the gradient of the interaction potential, we must ensure $W$ remains differentiable. 
We are going to prove that if the initial measure $\mu_0$ is concentrated away from the cut locus, solutions (for small time) exist in measure sense. The idea is that the minimizing movement scheme will not instantly move to the cut locus, it needs time to spread.
\begin{definition}\label{distancecut}(Distance to Cut)\\
Let $(M,g)$ be smooth, compact, connected Riemannian manifold, for $\mu \in \mathcal{P}_{c}(M)$ we define the distance to cut locus, $\delta_{\mu}$, as the distance between every pair on the support to the cut locus, i.e.
\begin{equation}
    \delta_{\mu} := \inf_{\substack{x,y \in \spt(\mu) \\  (x',y') \in Cut}} \{ d(x,x') + d(y,y')\}.
\end{equation} 
\end{definition}
\begin{theorem} (Local existence of measure solutions to the aggregation equation) \label{existenceanduniqueness}\\ 
Given $\mu_0 \in \mathcal{P}_{ac}^2(M)$ let $\delta_{\mu}$ be the distance to cut as in Definition \ref{distancecut}, if $\delta_{\mu} > 0$ and $L$ denotes the Lipschitz constant of $W$ (from \ref{Wishd}); under \ref{h0}-\ref{Wishd}, for every $ 0 < T < \frac{\delta_{\mu}}{2L}$  there exists a sequence from the minimizing movement scheme for $E_W$ at level $\tau > 0$ starting at $\mu_0$ such that as $\tau \to 0$ the geodesic interpolation $\mathbf{Geo}_{\tau}(\{\mu_{k}^{\tau} \})(t)$ converges in $d_2$-metric to a path $ \mu(t) $ which is a measure valued solution (in the sense of \eqref{measure-sol}) to the aggregation equation on $M$ \eqref{aggregation} up to time $T$. 

\end{theorem}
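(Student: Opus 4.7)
The plan is to run the minimizing movement scheme from $\mu_0$ and combine the finite speed of propagation (Proposition \ref{finitespeed}) with the square estimate (Theorem \ref{squareest}) to extract a $d_2$-convergent limit of the geodesic interpolations, then pass to the limit in the weak formulation \eqref{measure-sol}. The underlying reason this works at all is that during the short time window $T < \delta_\mu/(2L)$ the supports of the iterates $\mu_k^\tau$ never cross into a neighborhood of $\operatorname{Cut}$, so on these supports $W$ behaves like a smooth, Lipschitz potential and classical Wasserstein gradient-flow technology applies.

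First, I would use the finite speed of propagation to turn the hypothesis $\delta_{\mu_0} > 0$ into a uniform bound. The idea is to show that for every $k$ with $k\tau \le T$, the support $\spt(\mu_k^\tau)$ remains at a positive distance from the cut locus, with the loss per step controlled by $L$ (essentially, in one step no mass can travel further than an amount proportional to $L\tau$, since the Kantorovich potential's gradient is controlled by $L$ by the Euler-Lagrange analysis alluded to in Lemma \ref{optimality} and Proposition \ref{phidifferentiable}). Iterating, $\delta_{\mu_k^\tau} \geq \delta_{\mu_0} - 2Lk\tau > 0$ for $k\tau < \delta_\mu/(2L)$, which is exactly the hypothesis $T < \delta_\mu/(2L)$. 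This guarantees that $W$, and in particular $\nabla_M(W\ast \mu_k^\tau)$, is well-defined and continuous on the relevant supports uniformly in $\tau$.

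Second, with the supports under control, I would study the geodesic interpolation $\geo_\tau(\{\mu_k^\tau\})(t)$. The square estimate \eqref{squareestimate} gives $d_2(\mu_k^\tau,\mu_{k+1}^\tau)^2 \le C\tau \cdot a_k$ with $\sum_k a_k$ bounded, hence the standard computation yields the uniform $1/2$-Hölder bound
\begin{equation*}
d_2\bigl(\geo_\tau(s),\geo_\tau(t)\bigr)\le C\sqrt{|t-s|+\tau}
\end{equation*}
on $[0,T]$. Since $(\mathcal{P}_2(M),d_2)$ is compact (because $M$ is), a refined Arzelà–Ascoli argument as in \citep{Ambrosio} gives a subsequence $\tau_n \to 0$ and a limit curve $\mu(t)$ such that $\geo_{\tau_n}(t)\to \mu(t)$ in $d_2$ uniformly on $[0,T]$. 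A parallel argument shows $\pc_{\tau_n}(t)\to \mu(t)$ as well, and the two interpolations have the same limit.

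Third and hardest: passing to the limit in \eqref{measure-sol}. The Euler-Lagrange conditions for \eqref{minimizingmovements} identify, on each step, the optimal transport map from $\mu_k^\tau$ to $\mu_{k+1}^\tau$ as $\exp_x(-\tau\, \nabla_M(W\ast \mu_{k+1}^\tau)(x))$ up to the usual technicalities; this is the discrete analogue of the continuity equation driven by $v_t = -\nabla_M(W\ast\mu_t)$. Testing against a smooth $\phi \in C_c^\infty((0,T)\times M)$, summing by parts in $k$, and using a Taylor expansion of $\phi$ along the geodesic in the definition of $\geo_\tau$ produces, modulo $O(\tau)$ error controlled by the square estimate and the Lipschitz bound on $\nabla^2 \phi$, the discrete identity
\begin{equation*}
\int_0^T\!\!\int_M \partial_t \phi + \langle \nabla_M \phi,\nabla_M(W\ast \pc_{\tau}(t))\rangle \, d\geo_\tau(t)\, dt = O(\tau).
\end{equation*}
The main obstacle is then the convergence of the nonlinear term $\nabla_M(W\ast \pc_{\tau_n}(t))(x)$ to $\nabla_M(W\ast \mu(t))(x)$ uniformly on the common relevant region; this is exactly where the support control bought by finite speed of propagation is essential, because on a neighborhood of $\bigcup_{t\le T}\spt(\mu(t))$ that stays away from $\operatorname{Cut}$, the map $\nu\mapsto \nabla_M(W\ast\nu)$ is continuous in $d_1$ (hence in $d_2$) by the explicit formula $\nabla_M W(x,y) = 2h'(d(x,y)^2)\nabla_x d(x,y)^2/2$ and smoothness of $d^2/2$ off the cut locus. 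Combining this with the Kantorovich–Rubinstein representation and the dominated convergence argument then lets one pass to the limit, identifying $\mu(t)$ as a measure-valued solution to \eqref{aggregation} on $[0,T]$.
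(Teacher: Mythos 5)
Your proposal follows essentially the same route as the paper's proof: finite speed of propagation (Proposition \ref{finitespeed}, Corollary \ref{SmallTime}) to keep supports away from $Cut$, the square estimate plus uniform $1/2$-H\"older continuity and Arzel\`a--Ascoli (Lemma \ref{abscontleveltau}, Corollary \ref{corollaryexists}) to extract the limit curve, the Euler--Lagrange identification $-\nabla\phi^c_{k,k+1}/\tau = \nabla_M(W\ast\mu_{k+1}^\tau)$ (Lemmas \ref{phidifferentiable}, \ref{optimality}) to recognize the velocity, and Kantorovich--Rubinstein duality with smoothness of $W$ off the cut locus to pass to the limit in the nonlinear term. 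The argument is correct and the minor differences (use of the piecewise constant interpolation, phrasing the last step as $d_1$-continuity of $\nu\mapsto\nabla_M(W\ast\nu)$ rather than the product-measure contraction of Lemma \ref{contractd1}) are cosmetic.
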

The proof of Theorem \ref{existenceanduniqueness} is the main goal of this document and will occupy the rest of the article. We will ensure the convergence of the minimizing movement scheme using a general version of the Arzela-Ascoli that can be found in \citep{Bourbaki}.
\begin{theorem} \label{MetricAA} (General version of Arzela-Ascoli) \\
Let $ X$ be a topological space and $Y$ a metric space, let $H$ be an equicontinuous family of  functions from $X$ to $Y$ such that for every $x \in X$, $H(x) := \{ h(x): h \in H\}$ is relatively compact in $Y$. Then $H$ is relatively compact with respect to the compact topology.
\end{theorem}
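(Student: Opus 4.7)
The plan is to reduce the claim to Tychonoff's theorem by showing that on the closure of $H$ the compact topology and the product (pointwise) topology coincide, so that compactness in the simpler product topology transfers to the topology of interest. First I would embed $H$ into the product $Y^X$ and form the pointwise closure $\overline{H}$. Since for every $x \in X$ the set $\overline{H(x)} \subseteq Y$ is compact, Tychonoff's theorem yields that $K := \prod_{x \in X} \overline{H(x)}$ is compact in the product topology; because $H \subseteq K$ and $K$ is closed in $Y^X$, the pointwise closure $\overline{H}$ is a closed subset of $K$, hence compact in the product topology.

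Next I would upgrade equicontinuity of $H$ to equicontinuity of $\overline{H}$. Fix $f \in \overline{H}$, $x_0 \in X$, and $\varepsilon > 0$. Equicontinuity of $H$ provides a neighborhood $U$ of $x_0$ with $d_Y(h(x), h(x_0)) < \varepsilon/3$ for all $h \in H$ and $x \in U$. For each $x \in U$, the two-coordinate pointwise neighborhood $\{g \in Y^X : d_Y(g(x), f(x)) < \varepsilon/3, \; d_Y(g(x_0), f(x_0)) < \varepsilon/3\}$ meets $H$, yielding some $h \in H$ close to $f$ at both points; the triangle inequality then gives $d_Y(f(x), f(x_0)) < \varepsilon$.

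The core step is to show that the relative product topology and the relative compact topology agree on $\overline{H}$. Since the compact topology is always finer than the pointwise one, I only need the reverse inclusion on $\overline{H}$. Given a compact $C \subseteq X$, $\varepsilon > 0$, and $f \in \overline{H}$, equicontinuity of $\overline{H}$ (established in the previous step) provides, for each $x \in C$, a neighborhood $U_x$ on which every $g \in \overline{H}$ satisfies $d_Y(g(y), g(x)) < \varepsilon/3$ for $y \in U_x$. Extract a finite subcover $U_{x_1}, \ldots, U_{x_n}$ and consider the pointwise neighborhood $V := \{g \in \overline{H} : d_Y(g(x_i), f(x_i)) < \varepsilon/3 \text{ for } i = 1, \ldots, n\}$; a standard triangle-inequality chase through the $x_i$ shows $V \subseteq \{g \in \overline{H} : \sup_{y \in C} d_Y(g(y), f(y)) < \varepsilon\}$. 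This exhibits every basic compact-topology neighborhood of $f$ as containing a pointwise one, so the two topologies coincide on $\overline{H}$.

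Combining the three steps, $\overline{H}$ is compact in the compact topology, and since $H \subseteq \overline{H}$, the family $H$ is relatively compact in that topology. The main obstacle I expect is that $X$ is only assumed to be a general topological space, so sequential arguments are not available: the passage from $H$ to its pointwise closure and the ``simultaneous approximation at finitely many points'' used in Steps 2 and 3 must be phrased via the basic neighborhoods of the product topology (equivalently, via nets) rather than via sequences, which is where careful bookkeeping is needed.
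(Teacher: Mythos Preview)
The paper does not actually prove this theorem: immediately after the statement it simply writes ``For a proof see Corollary~1 to Theorem~2, section~V of \cite{Bourbaki}.'' So there is no in-paper argument to compare against.

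Your proposal is correct and is precisely the classical route (the one Bourbaki, Kelley, and Munkres all take): (i) Tychonoff gives compactness of the pointwise closure $\overline{H}$ inside $\prod_{x}\overline{H(x)}$; (ii) equicontinuity passes to $\overline{H}$ via finite-coordinate product neighborhoods and the triangle inequality; (iii) on an equicontinuous set the topology of uniform convergence on compacta coincides with the product topology, by the finite-subcover argument you outline; (iv) hence $\overline{H}$ is compact for the compact topology and $H$ is relatively compact. Your caveat about avoiding sequences and working with basic product neighborhoods (or nets) is exactly the right bookkeeping point for a general topological space $X$. One cosmetic remark: Step~(ii) also shows every $f\in\overline{H}$ is continuous, so $\overline{H}\subseteq C(X,Y)$, which is worth stating explicitly since the ``compact topology'' in the statement is most naturally read as the compact-open topology on $C(X,Y)$; on continuous maps into a metric space this agrees with uniform convergence on compacta, so your Step~(iii) applies as written.
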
 
For a proof see Corollary 1 to Theorem 2, section V of \citep{Bourbaki}
\begin{corollary}(Existence of a Limiting path) \label{corollaryexists} \\
Fix $T > 0$, suppose that $\geo (\{\mu_{k}^{\tau})\}(t)$ is defined for all $ \tau \in (0,1]$ and for all $ t \in [0,T]$.Then there exists a subsequence $\tau_n$, with $\tau_n \to 0$ and a curve $\mu:[0,T] \to (\mathcal{P}_2(M), d_2)$ such that 
\begin{equation}\label{uniformont}
\sup_{t \in [0,T]} d_2(\geo (\{\mu_{k}^{\tau_n})\}(t),\mu(t)) \to 0 \: \: (\text{as } n \to 0).
\end{equation}
\begin{proof}
Note that Lemma \ref{abscontleveltau} (proved in the next section) shows uniform equicontinuity of the family, as the Hölder constant does not depend on $\tau$. For every $t \in [0,T]$, the family $\{ \geo (\{\mu_{k}^{\tau})\}(t) \}$ is tight by Prokhorov's theorem and because $M$ is compact, it is also $d_2$ relatively compact. Hence $H = \{ \geo (\{\mu_{k}^{\tau}\}): [0,T] \to \mathcal{P}_2(M) \}_{\tau}$ satisfy the hypothesis of Theorem \ref{MetricAA} and the result follows. 
\end{proof}
\end{corollary}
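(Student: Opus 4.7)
The plan is to realize this as a direct application of the general Arzelà--Ascoli theorem (Theorem \ref{MetricAA}) to the family of curves
\[
  H \;=\; \bigl\{\, \geo(\{\mu_k^\tau\})\colon [0,T]\to (\mathcal{P}_2(M),d_2) \,\bigr\}_{\tau\in(0,1]},
\]
viewing $[0,T]$ as the topological space and $(\mathcal{P}_2(M),d_2)$ as the target metric space. Once both hypotheses of Theorem \ref{MetricAA} are verified, Arzelà--Ascoli will produce a subsequence converging in the compact-open topology; since $[0,T]$ is compact, compact-open convergence to a continuous limit is automatically uniform convergence, giving \eqref{uniformont}.

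The first step is pointwise relative compactness: for each fixed $t\in[0,T]$, the set $H(t)=\{\geo(\{\mu_k^\tau\})(t):\tau\in(0,1]\}\subseteq \mathcal{P}_2(M)$ is relatively compact in $d_2$. Because $M$ is compact, tightness is automatic (Prokhorov's theorem applies trivially, every family of probability measures on a compact space is tight), so $H(t)$ is weakly relatively compact. On a compact metric space $d_2$-convergence and narrow convergence coincide, so relative compactness in $(\mathcal{P}_2(M),d_2)$ follows. The second step is equicontinuity, which I would reduce to an estimate of the form $d_2(\geo_\tau(s),\geo_\tau(t))\le C\sqrt{|t-s|}+o_\tau(1)$ with $C$ independent of $\tau$. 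The natural route is to split $[s,t]$ into the time steps of the partition: within a single interval $[k\tau,(k+1)\tau]$ the geodesic interpolation moves along a Wasserstein geodesic so $d_2(\geo_\tau(s'),\geo_\tau(t'))=\tfrac{|t'-s'|}{\tau}d_2(\mu_k^\tau,\mu_{k+1}^\tau)$; across several steps, sum by triangle inequality and apply Cauchy--Schwarz together with the square estimate \eqref{squareestimate} to get the uniform Hölder-$\tfrac12$ bound. This is precisely the content the author refers to as Lemma \ref{abscontleveltau}, so I would invoke it as a black box.

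Combining the two ingredients, Theorem \ref{MetricAA} yields a subsequence $\tau_n\to 0$ along which $\geo(\{\mu_k^{\tau_n}\})$ converges in the compact-open topology to some map $\mu\colon[0,T]\to(\mathcal{P}_2(M),d_2)$; the $\tau$-independent modulus of continuity passes to the limit, so $\mu$ is continuous (in fact Hölder-$\tfrac12$), and by compactness of $[0,T]$ the convergence is uniform in $t$, which is exactly \eqref{uniformont}. The only nontrivial ingredient is the equicontinuity estimate, and within the scope of this corollary it is used as a cited lemma; the corollary itself is essentially bookkeeping around Arzelà--Ascoli.
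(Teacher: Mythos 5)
Your proposal is correct and follows essentially the same route as the paper: uniform $\tfrac12$-Hölder equicontinuity from Lemma \ref{abscontleveltau}, pointwise $d_2$-relative compactness from compactness of $M$ (tightness via Prokhorov), and then the general Arzel\`a--Ascoli Theorem \ref{MetricAA}, with compact-open convergence on the compact interval $[0,T]$ upgrading to the uniform statement \eqref{uniformont}. Your added remarks on the limit inheriting the H\"older modulus and on the proof sketch of the equicontinuity lemma are consistent with what the paper does elsewhere (Corollary \ref{corolimit} and Lemma \ref{abscontleveltau}).
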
 
\subsection{Properties of the limiting measure path}
By Corollary \ref{corollaryexists} we know that as long as we can define the geodesic interpolation up to time $T$, we obtain the existence of a limiting path $\mu(t)$. We have yet to show that this limiting path $\mu(t)$ satisfies \eqref{aggregation}, for which we will work with the Euler-Lagrange conditions of the minimizing movement scheme \eqref{minimizingmovements}.
\subsection{Continuity and optimality }

\begin{definition}(Absolute continuity in $(\mathcal{P}^2(M),d_2)$) \label{a.c.} \\
We say that a curve $t \to \rho_t$ mapping $(a,b)$ to $ (\mathcal{P}^2(M),d_2) $ is absolutely continuous if there exists an integrable (w.r.t Lebesgue)  function $g:(a,b) \to \mathbb{R}$ such that
$$ d_2(\rho_t,\rho_s) \leq \int_s^t g(r) dr. $$
And we say it is $p$-absolutely continuous if $g$ is $L^p$-integrable.
\end{definition}
\begin{definition}(Norm of metric derivative) \\
If $\rho_t \in \mathcal{P}^2(M)$, we call the metric derivative (or slope of metric derivative or speed) the function 
$$ \lvert \rho_t'\rvert  := \lim_{h\to 0} \frac{d_2(\rho_{t+h},\rho_t)}{h} $$
whenever it exists.
\end{definition}
\begin{lemma}(Metric derivative for $p$-absolutely continuous curves) \\
Let $\{\rho_t\}_{t\in [a,b]}$ be $p$-absolutely continuous. Then $\lvert \rho_s'\rvert$ exists Lebesgue a.e. and $ t \to \lvert \rho_t'\rvert $ is also $p$-integrable in $(a,b)$. 

\end{lemma}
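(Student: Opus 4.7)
The plan is to reduce the problem to the scalar case via a countable dense subset of times, in the spirit of the standard argument of Ambrosio--Gigli--Savaré. Fix a countable dense set $\{t_n\}_{n\in\mathbb{N}}\subset (a,b)$ and define the real-valued functions $\phi_n(t) := d_2(\rho_t,\rho_{t_n})$. By the triangle inequality in $(\mathcal{P}^2(M),d_2)$ and the defining estimate of Definition \ref{a.c.}, one gets $|\phi_n(t)-\phi_n(s)|\le d_2(\rho_t,\rho_s)\le \int_s^t g(r)\,dr$, so each $\phi_n$ is absolutely continuous on $[a,b]$ as a scalar function. Hence by the classical Lebesgue differentiation for scalar AC functions, $\phi_n'(t)$ exists for a.e. $t$ and satisfies $|\phi_n'(t)|\le g(t)$ a.e. Setting $m(t):=\sup_n|\phi_n'(t)|$ gives a measurable function with $m\le g$ a.e., hence $m\in L^p(a,b)$.

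The next step, which is the heart of the argument, is to upgrade $m$ from a pointwise sup of one-sided slopes to a genuine upper gradient: I will show
\begin{equation*}
d_2(\rho_u,\rho_v)\;=\;\sup_{n}|\phi_n(u)-\phi_n(v)|\qquad \text{for all }u,v\in[a,b].
\end{equation*}
The inequality $\le$ is clear from triangle, and $\ge$ follows by picking a subsequence $t_{n_k}\to v$ (possible by density) and using the continuity $t\mapsto \rho_t$ inherited from absolute continuity: $\phi_{n_k}(u)\to d_2(\rho_u,\rho_v)$ while $\phi_{n_k}(v)\to 0$. Given this representation, for $v<u$ one has $|\phi_n(u)-\phi_n(v)|=\bigl|\int_v^u\phi_n'(r)\,dr\bigr|\le \int_v^u m(r)\,dr$, whence
\begin{equation*}
d_2(\rho_u,\rho_v)\;\le\;\int_v^u m(r)\,dr.
\end{equation*}

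To conclude, I will combine two opposite bounds at a Lebesgue point $t$ of $m$. The upper bound follows from the integral inequality just derived: $\limsup_{h\to 0}d_2(\rho_{t+h},\rho_t)/|h|\le m(t)$ by Lebesgue's differentiation theorem. The matching lower bound comes from observing that for every fixed $n$, $|\phi_n(t+h)-\phi_n(t)|/|h|\le d_2(\rho_{t+h},\rho_t)/|h|$, so taking $h\to 0$ at a differentiability point of $\phi_n$ gives $|\phi_n'(t)|\le \liminf_{h\to 0}d_2(\rho_{t+h},\rho_t)/|h|$, and then supping over $n$ yields $m(t)\le \liminf$. Thus the metric derivative exists a.e., coincides with $m$, and $m\in L^p$ settles the integrability claim.

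The step I expect to be the main obstacle is the identity $d_2(\rho_u,\rho_v)=\sup_n|\phi_n(u)-\phi_n(v)|$, since it is the only part that genuinely uses the geometric fact that the curve $t\mapsto\rho_t$ is continuous (a consequence of absolute continuity via the bound by $\int g$) and the density of $\{t_n\}$ in $[a,b]$; everything else is an application of the one-dimensional theory of absolutely continuous real functions together with Lebesgue differentiation.
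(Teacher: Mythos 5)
Your proof is correct and is essentially the paper's own argument (the standard one from \citep{Ambrosio}): the paper's sketch uses a dense sequence $\{y_n\}$ in the image of the curve, while you index the dense family by a dense set of times $\{t_n\}$ and recover density in the image via the continuity of $t\mapsto\rho_t$, an immaterial difference. You simply fill in the details (the identity $d_2(\rho_u,\rho_v)=\sup_n\lvert\phi_n(u)-\phi_n(v)\rvert$, the resulting bound $d_2(\rho_u,\rho_v)\le\int_v^u m(r)\,dr$, and the Lebesgue-point comparison) that the paper delegates to the reference.
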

\begin{proof}
As presented in \citep{Ambrosio}, letting $\{y_n\}$ be dense in $\{\rho_s\}_{s\in (a,b)}$ one can check that 
$$ \liminf_{t\to s} \frac{d(\rho_s,\rho_t)}{\lvert t-s \rvert} \geq \sup_n \liminf_{t \to s} \frac{\lvert d(y_n, \rho_s) - d(y_n,\rho_t)\rvert}{\lvert t-s\rvert} $$
from which the result follows. 
\end{proof}
\begin{lemma} \label{abscontleveltau}($\frac{1}{2}$-Hölder continuity of geodesic interpolation uniformly in $\tau$) \\
For $T>0$ if $\mathbf{Geo}_{\tau}(\{ \mu_{k}^{\tau} \}(t)$ denotes the geodesic interpolation (as in Remark \ref{interpolation}) on the interval $[0,T]$, then $ \{ \mathbf{Geo}_{\tau}(\{ \mu_{k}^{\tau} \}) (t)\}_{t \in [0,T]}$ is $\frac{1}{2}$-Hölder uniformly continuous, i.e. there exists $\Tilde{C} > 0$ independent of $\tau$ such that
\begin{equation}
    d_2(\mathbf{Geo}_{\tau}(\{ \mu_{k}^{\tau} \})(t),\mathbf{Geo}_{\tau}(\{ \mu_{k}^{\tau} \})(s)) \leq \Tilde{C}(t-s)^{1/2}.
\end{equation} 
\end{lemma}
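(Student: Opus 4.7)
The plan is to combine two observations. First, by construction of the geodesic interpolation in Remark \ref{interpolation}, on each sub-interval $[k\tau,(k+1)\tau]$ the curve $\geo(\{\mu_k^\tau\})$ is the McCann displacement interpolation between $\mu_k^\tau$ and $\mu_{k+1}^\tau$ (Theorem \ref{McCann} identifies $\exp_x(\nabla\phi_{k,k+1}^{c})$ as the optimal map from $\mu_{k+1}^\tau$ back to $\mu_k^\tau$, and the linear rescaling of the exponent produces a constant-speed Wasserstein geodesic), so
\[
 d_2\bigl(\geo(\{\mu_k^\tau\})(t),\geo(\{\mu_k^\tau\})(s)\bigr) \;=\; \frac{|t-s|}{\tau}\, d_2(\mu_k^\tau,\mu_{k+1}^\tau), \qquad s,t\in[k\tau,(k+1)\tau].
\]
Second, Theorem \ref{squareest} gives the uniform bound $\sum_k d_2(\mu_k^\tau,\mu_{k+1}^\tau)^2/\tau \leq C$; in particular every single step obeys $d_2(\mu_k^\tau,\mu_{k+1}^\tau)\leq\sqrt{C\tau}$.

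With these two ingredients the within-interval case is immediate: for $s,t\in[k\tau,(k+1)\tau]$ one has $d_2\leq (t-s)\sqrt{C/\tau}\leq \sqrt{C(t-s)}$ because $t-s\leq \tau$. For $s\in[j\tau,(j+1)\tau]$ and $t\in[k\tau,(k+1)\tau]$ with $j<k$, I would telescope through the nodes $\mu_{j+1}^\tau,\ldots,\mu_k^\tau$ via the triangle inequality and apply the constant-speed identity on the two fractional end-pieces. The upshot is a weighted $\ell^1$ estimate
\[
 d_2\bigl(\geo(\{\mu_k^\tau\})(s),\geo(\{\mu_k^\tau\})(t)\bigr) \;\leq\; \sum_{i=j}^{k} c_i\, d_2(\mu_i^\tau,\mu_{i+1}^\tau),
\]
with $c_j:=\alpha=((j+1)\tau-s)/\tau$, $c_k:=\beta=(t-k\tau)/\tau$, both in $[0,1]$, and $c_i=1$ for $j<i<k$. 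A single Cauchy--Schwarz against the square-summability bound of Theorem \ref{squareest} then delivers the desired $\sqrt{t-s}$ estimate, \emph{provided} that $\tau\sum c_i^2 \leq t-s$.

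This last inequality is the only computational point of the proof, and it is routine: one checks $\tau(\alpha+\beta+(k-j-1))=t-s$ exactly, and since $\alpha,\beta\in[0,1]$ the dominance $\alpha^2\leq\alpha$, $\beta^2\leq\beta$ yields $\tau\sum c_i^2 \leq \tau(\alpha+\beta+(k-j-1))=t-s$. Inserting this into Cauchy--Schwarz gives the Hölder bound with $\tilde C=\sqrt{C}$, independent of $\tau$, completing the lemma. There is no real obstacle here; the content is just the classical $L^2$-to-Hölder upgrade of minimizing-movement schemes, transplanted to the metric setting through the displacement-interpolation identity.
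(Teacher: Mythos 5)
Your proof is correct and follows essentially the same route as the paper: both arguments rest on the constant-speed identity $d_2(\geo(\{\mu_k^\tau\})(t),\geo(\{\mu_k^\tau\})(s))=\frac{|t-s|}{\tau}d_2(\mu_k^\tau,\mu_{k+1}^\tau)$ on each sub-interval and a Cauchy--Schwarz estimate against the square bound of Theorem \ref{squareest}. The only difference is presentational --- you telescope discretely through the nodes with explicit weights $\alpha,\beta$ instead of integrating the metric derivative over $[s,t]$ as the paper does, which if anything spells out the cross-interval case more explicitly.
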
 
\begin{proof}
Let $\tau > 0$ be fixed, if $t_1,t_2 \in [k\tau, (k+1)\tau)$ and $ t_1 > t_2$ then the geodesic property of the exponential map yields: 
\begin{align}
  d_2(\mathbf{Geo}_{\tau}(\{ \mu_{k}^{\tau} \})(t_1),\mathbf{Geo}_{\tau}(\{ \mu_{k}^{\tau} \})(t_2)) &= \left(  \int \lvert \frac{t_1-t_2}{\tau} \nabla \phi_{k,k+1}^{c} \rvert^2 d \mu_{k}^{\tau} \right)^{1/2} \\
& = \left( \frac{t_1-t_2}{\tau} \right) d_2(\mu_{k}^{\tau}, \mu_{k+1}^{\tau}). 
\end{align}
Hence, by definition of the metric derivative we obtain
$$  \lvert \mathbf{Geo}_{\tau}(\{ \mu_{k}^{\tau} \}) '(t) \rvert = \lim_{h \to 0} \frac{(h/\tau)d_2(\mu_k^{\tau},\mu_{k+1}^{\tau})}{h} = \frac{d_2(\mu_k^{\tau},\mu_{k+1}^{\tau})}{\tau}. $$
With this calculation in mind, we compute using Hölder's inequality and Proposition \ref{squareest},
\begin{align*}
& d_2(\mathbf{Geo}_{\tau}(\{ \mu_{k}^{\tau} \})(t),\mathbf{Geo}_{\tau}(\{ \mu_{k}^{\tau} \})(s)) \\
 &= \int_s^t  \lvert \mathbf{Geo}_{\tau}(\{ \mu_{k}^{\tau} \}) '(r) \rvert dr \leq (t-s)^{1/2} \left( \sum_{k = 1}^{\infty} \frac{d_2(\mu_{k}^{\tau},\mu_{k+1}^{\tau})^2}{\tau} \right)^{1/2} \leq C (t-s)^{1/2} 
 \end{align*}
\end{proof}
Recall that by Corollary \ref{corollaryexists} we have ensured the existence of a limiting measure path (as $\tau \to 0$) of the geodesic interpolation of the minimizing movement scheme for $E_W$. We observe that uniform (on $\tau$) absolute continuity  (Lemma \ref{abscontleveltau}) implies absolute continuity of the limiting path.
\begin{corollary} (The limit shares the Hölder constant) \label{corolimit}\\
Let $T > 0$ and suppose that for every $t \in [0,T]$ we have that as $\tau \to 0$,  $ \mathbf{Geo}_{\tau}(\{ \mu_{k}^{\tau} \})(t) \xrightarrow{d_2} \mu(t)$, then $\mu (t)$ is $1/2$-Hölder continuous in $[0,T]$ with constant $C$. 
\end{corollary}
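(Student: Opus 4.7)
The plan is a direct triangle-inequality argument: the limit inherits uniform estimates that are preserved under pointwise limits. Fix $s,t \in [0,T]$. I would write
\begin{align*}
d_2(\mu(s),\mu(t)) &\leq d_2(\mu(s), \mathbf{Geo}_{\tau}(\{\mu_k^{\tau}\})(s)) \\
&\quad + d_2(\mathbf{Geo}_{\tau}(\{\mu_k^{\tau}\})(s), \mathbf{Geo}_{\tau}(\{\mu_k^{\tau}\})(t)) \\
&\quad + d_2(\mathbf{Geo}_{\tau}(\{\mu_k^{\tau}\})(t), \mu(t)).
\end{align*}
By Lemma \ref{abscontleveltau}, the middle term is bounded by $\tilde{C}(t-s)^{1/2}$ uniformly in $\tau$. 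By the hypothesis of pointwise $d_2$-convergence at both $s$ and $t$, the first and third terms tend to $0$ along the subsequence $\tau_n \to 0$ from Corollary \ref{corollaryexists}.

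Taking $\tau = \tau_n \to 0$ on both sides yields $d_2(\mu(s),\mu(t)) \leq \tilde{C}(t-s)^{1/2}$, which is exactly the $1/2$-Hölder continuity of $\mu$ with the same constant $\tilde{C}$ coming from Lemma \ref{abscontleveltau} (and Theorem \ref{squareest}). Since $s,t$ were arbitrary in $[0,T]$, this establishes the claim on the whole interval.

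There is no real obstacle here: the only thing to verify is that the uniform Hölder estimate from Lemma \ref{abscontleveltau} truly does not depend on $\tau$ (which the lemma explicitly states), and that $d_2$-convergence at each fixed time is enough to kill the two boundary terms (which is automatic, since we are not asserting any modulus of continuity in $\tau$, only pointwise convergence). The statement is essentially a ``closedness under limits'' property of $1/2$-Hölder functions valued in a metric space, applied to $(\mathcal{P}_2(M), d_2)$.
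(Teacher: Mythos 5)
Your proposal is correct and coincides with the paper's own argument: both use the triangle inequality through $\mathbf{Geo}_{\tau}(\{\mu_k^{\tau}\})$ at the two times, invoke the $\tau$-uniform $1/2$-H\"older bound of Lemma \ref{abscontleveltau} for the middle term, and let the two error terms vanish via the pointwise $d_2$-convergence (the paper phrases this with an $\epsilon$ instead of a limit along $\tau_n$, which is the same thing). No gaps.
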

\begin{proof} Given $\epsilon > 0$, there exists $ \tau = \tau(\epsilon)$ such that $d_2(\mu(t),\mathbf{Geo}_{\tau}(\{ \mu_{k}^{\tau} \})(t)) < \frac{\epsilon}{2}$ and $d_2(\mu(s),\mathbf{Geo}_{\tau}(\{ \mu_{k}^{\tau} \})(s)) < \frac{\epsilon}{2}  $ from which applying the previous result (Lemma \ref{abscontleveltau}) and triangle inequality we obtain
$$ d_2(\mu(t), \mu(s)) \leq \epsilon + C(t-s)^{1/2}.  $$
Because $\epsilon$ is arbitrary and $C$ does not depend on $\tau$ we get the result. 
\end{proof}
Recall that if $(\mu_t,v_t)$ satisfies the continuity equation in the sense of distributions, then for every $ f \in C_c^{\infty}(M)$ 
$$ \frac{d}{dt}\int f(x) d\mu_t = - \int \langle \nabla f(x), v_t(x) \rangle_x d\mu_t(x). $$
See for example \cite{santambrogio} Proposition 4.2.
\begin{lemma}\label{Taylor}(Computation of the velocity field) \\
Letting $(M^n,g)$ be a smooth, connected, compact manifold, the velocity field of the geodesic interpolation of the minimizing movement scheme $\geo(\{\mu_{k}^{\tau})$ is given by parallel transporting the gradient of the $c$-transform of it's Kantorovich potential on each interval.
\end{lemma}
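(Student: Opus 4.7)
The strategy is to exploit the explicit definition of $\geo$ as a pushforward and read the velocity field off the time derivative of the underlying transport map. Fix $t \in [k\tau, (k+1)\tau)$, set $V(x) := \nabla \phi_{k,k+1}^c(x)$ and $s(t) := ((k+1)\tau - t)/\tau$, and consider the family
$$ T_t(x) := \exp_x\bigl(s(t)\,V(x)\bigr), $$
so that $\geo(\{\mu_k^\tau\})(t) = (T_t)_\# \mu_{k+1}^\tau$ by definition. Each curve $t \mapsto T_t(x)$ is a reparametrization of the geodesic $\gamma_x(s) := \exp_x(sV(x))$ starting at $x$ with initial velocity $V(x)$, whose derivative is therefore accessible by direct computation.

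For $\mu_{k+1}^\tau$-a.e.\ $x$ the gradient $V(x)$ is available thanks to Proposition \ref{phidifferentiable}. Since $\gamma_x$ is a geodesic, its velocity is parallel along itself, so
$$ \dot\gamma_x(s) = P_0^s\bigl(V(x)\bigr), $$
where $P_0^s$ denotes parallel transport along $\gamma_x$ from $x$ to $\gamma_x(s)$. The chain rule then yields
$$ \dot T_t(x) = s'(t)\,\dot\gamma_x(s(t)) = -\frac{1}{\tau}\, P_0^{s(t)}\bigl(V(x)\bigr). $$

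To convert this pointwise time derivative into the velocity field of $\mu_t := \geo(\{\mu_k^\tau\})(t)$, differentiate the pushforward: for any $f \in C_c^\infty(M)$,
$$ \frac{d}{dt}\int_M f\, d\mu_t = \int_M \bigl\langle \nabla_M f(T_t(x)),\, \dot T_t(x)\bigr\rangle_{T_t(x)}\, d\mu_{k+1}^\tau(x). $$
Changing variables $y = T_t(x)$ and comparing with the distributional continuity equation recalled just before the lemma, the velocity field at $y \in \spt(\mu_t)$ is identified as a scalar multiple of $P_0^{s(t)}\bigl(\nabla \phi_{k,k+1}^c(x)\bigr)$, namely the parallel transport of the gradient of the $c$-transform of the Kantorovich potential along the interpolating geodesic — matching the claim.

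The principal technical point is ensuring that $\nabla \phi_{k,k+1}^c$ exists at every $x \in \spt(\mu_{k+1}^\tau)$ (not just a.e.) and that $T_t$ is injective on this support, so that $v_t$ is unambiguously defined on $\spt(\mu_t)$. Both are direct consequences of the finite-speed-of-propagation Proposition \ref{finitespeed} together with Proposition \ref{phidifferentiable}: the supports stay strictly away from the cut locus, so the interpolating geodesics are the unique length-minimizers and $T_t$ is a homeomorphism of $\spt(\mu_{k+1}^\tau)$ onto $\spt(\mu_t)$. This is the main place where the previously established regularity on the support genuinely enters the argument.
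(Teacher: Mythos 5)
Your proposal is correct and follows essentially the same route as the paper: differentiate the interpolating map $T_t(x)=\exp_x\left(\frac{(k+1)\tau-t}{\tau}\nabla\phi_{k,k+1}^c(x)\right)$ in $t$, use that a geodesic's velocity is the parallel transport of its initial velocity, and read the velocity field off the differentiated pushforward/continuity equation, exactly as the paper does with its $\Pi_{t,\cdot}$ notation. The only substantive difference is that the paper's proof additionally records the Taylor expansion $v_t^{\tau}(x)=\frac{\nabla\phi_{k,k+1}^c(x)}{\tau}+R_t^{\tau}(x)$ with $\lvert R_t^{\tau}\rvert_x\to 0$ as $\tau\to 0$ (used later in the proof of Theorem \ref{existenceanduniqueness}), which your write-up omits but which is not part of the stated claim; your added remark on injectivity of $T_t$ on the support is a reasonable (if lightly justified) refinement the paper does not address.
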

\begin{proof}
Suppose that $ \mu_t = \exp_x(tv(x))_{\#}\mu_0$ for some $\mu_0 \in \mathcal{P}_{ac}^2(M)$ and a differentiable map $ v: M \to TM$, then by compactness of $M$ and dominated convergence,
\begin{align*}
 \frac{d}{dt} \int f(x) d\mu_t(x) =  & \int \frac{d}{dt} f(\exp_x(tv(x)))d\mu_0(x)  \\
 &= \int \langle \nabla f(\exp_x(t v (x))), \Pi_{t,v(x)} ( v(x) )\rangle_{\exp_x(tv(x))} d\mu_0(x), 
\end{align*}
where $\Pi_{t,v} (\cdot) = (d\exp_x)_{tv}(\cdot) $ denotes parallel transport along the geodesic $ t \to \exp_x(tv)$. 
\\
Now denote $T_t^{\tau}(x) = \exp_x \left( (\frac{(k+1) \tau - t}{\tau}) \nabla \phi_{k,k+1}^c(x) \right)$ and let $v_t^{\tau}$ be such that 
$$ \frac{\partial T_{t}}{\partial t}(x) = v_t^{\tau}(T_{t}(x)) = (d\exp_x)_{-t \frac{\nabla \phi_{k,k+1}^c}{\tau}}\left(- \frac{\nabla\phi_{k,k+1}^c}{\tau}\right) =\Pi_{t,\frac{-\nabla\phi_{k,k+1}^c}{\tau}}\left(-\frac{\nabla \phi_{k,k+1}^c}{\tau}\right). $$ 
Because the differential of the exponential map at $0$ is the identity operator we get that by Taylor expansion for $t \in [k\tau,(k+1)\tau)$

$$ v_t^{\tau}(x) = \frac{\nabla \phi_{k,k+1}^c (x)}{\tau} + R_{t}^{\tau}(x), $$
where $R_{t}^{\tau} (x) \in T_xM$ and satisfies that as $ k \tau \to t$ (equivalently $\tau \to 0$) we have $ \lvert R_{t}^{\tau}\rvert_{x} \to 0$.
\end{proof}
\begin{definition}(First variation of a functional in $\mathcal{P}(M)$) \\ 
Let $F$ be a functional $F: \mathcal{P}_2(M) \to \mathbb{R} $, let $\rho \in \mathcal{P}_2(M)$ be fixed  and $\epsilon > 0$, for any $\Tilde{\rho} \in \mathcal{P}^2_{ac} \cap L^{\infty}(M)$, define $\nu = \Tilde{\rho} - \rho $, we say that $\frac{\delta F}{\delta \rho} (\rho)$ is the first variation of $F$ evaluated at $\rho$ if 

$$ \frac{d}{d \epsilon} \bigg \lvert_{\epsilon = 0} F(\rho + \epsilon \nu) = \int \frac{\delta F}{\delta \rho}(\rho) d\nu. $$

\end{definition}
\begin{theorem} (Optimality criteria)
\label{firstvar} \\
For a functional $F: \mathcal{P}_2(M) \to \mathbb{R} $ suppose that $\mu \in \argmin_{\nu \in \mathcal{P}_2(M)}  F(\nu)$. Assume that for every $\epsilon > 0$ and for every $\rho$ absolutely continuous with $L^{\infty}(M)$ density 
$$ F((1-\epsilon) \mu + \epsilon \rho) < \infty $$
Let $\Tilde{c}:= essinf\left\{\frac{\delta F}{\delta \rho} (\mu) \right\}$. If  $\frac{\delta F}{\delta \rho} (\mu)$ is continuous, then
\begin{equation} 
\frac{\delta F}{\delta \rho} (\mu)(x) \geq \Tilde{c} \: \: \forall x \in M,
\end{equation}
\begin{equation}
     \frac{\delta F}{\delta \rho} (\mu)(x) = \Tilde{c} \: \: \forall x \in \spt(\mu).
\end{equation}
\end{theorem}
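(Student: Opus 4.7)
The plan is to exploit the minimality of $\mu$ by testing against convex combinations with admissible absolutely continuous competitors, derive a global variational inequality, and then localize it using continuity of the first variation.

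First I would fix $\rho \in \mathcal{P}^2_{ac} \cap L^\infty(M)$ and consider the perturbed curve $\mu_\epsilon := (1-\epsilon)\mu + \epsilon\rho$, which is admissible for $\epsilon \in [0,1]$ and satisfies $F(\mu_\epsilon) < \infty$ by hypothesis. Since $\mu$ is a minimizer, $F(\mu_\epsilon) - F(\mu) \geq 0$ for $\epsilon \in (0,1]$; dividing by $\epsilon$ and letting $\epsilon \to 0^+$, the definition of first variation yields
$$ \int \frac{\delta F}{\delta \rho}(\mu)\, d(\rho - \mu) \geq 0. $$
Setting $m := \int \frac{\delta F}{\delta \rho}(\mu)\, d\mu$, this rewrites as $\int \frac{\delta F}{\delta \rho}(\mu)\, d\rho \geq m$ for every admissible $\rho$.

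Second, to pass from this integrated inequality to a pointwise bound, I would localize. Fix $x_0 \in M$ and let $\rho_r$ be the normalized restriction of the Riemannian volume to $B(x_0,r)$, which is an admissible test measure. Applying the variational inequality to $\rho_r$ and using continuity of $\frac{\delta F}{\delta \rho}(\mu)$ to pass to the limit $r \to 0^+$ gives $\frac{\delta F}{\delta \rho}(\mu)(x_0) \geq m$. Since $x_0$ was arbitrary, the bound $\frac{\delta F}{\delta \rho}(\mu) \geq m$ holds on all of $M$.

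Third, combining the global lower bound $\frac{\delta F}{\delta \rho}(\mu) - m \geq 0$ with the identity $\int \bigl(\frac{\delta F}{\delta \rho}(\mu) - m\bigr)\, d\mu = 0$, the nonnegative continuous integrand must vanish $\mu$-almost everywhere, hence on all of $\spt(\mu)$ by continuity. To identify $m$ with $\tilde c$, note that the essential infimum of a continuous function coincides with its infimum on the support, so the global lower bound gives $\tilde c \geq m$, while the equality on the (nonempty) support gives $\tilde c \leq m$; thus $\tilde c = m$ and both statements of the theorem follow.

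The main obstacle is the localization step: one must justify that point evaluations of the continuous first variation can be extracted from a variational inequality that only admits absolutely continuous test densities with bounded $L^\infty$ norm. The continuity assumption on $\frac{\delta F}{\delta \rho}(\mu)$ makes this a standard mean-value/mollification argument, but without continuity the conclusion on $\spt(\mu)$ would only hold $\mu$-a.e., which would weaken the theorem and be insufficient for the Euler–Lagrange analysis in the sequel.
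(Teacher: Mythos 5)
Your proposal is correct and is essentially the argument the paper relies on: the paper does not prove this statement itself but cites Theorem 7.20 of Santambrogio, whose proof is the same convex-perturbation computation giving $\int \frac{\delta F}{\delta \rho}(\mu)\,d(\rho-\mu) \geq 0$ for admissible competitors, followed by a localization to reach the pointwise statements. Your only cosmetic deviation is to name the constant $m=\int \frac{\delta F}{\delta \rho}(\mu)\,d\mu$ and localize by testing with normalized volume on shrinking balls (legitimate here thanks to the assumed continuity and the full support of the volume measure), whereas the cited proof tests with densities concentrated where the first variation is near its essential infimum; both routes give $m=\tilde c$, the global inequality on $M$, and equality on $\spt(\mu)$.
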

The proof can be found as Theorem 7.20 in \cite{santambrogio}.
\begin{lemma} (Computation of first variations) \\
For each of the following cases let $\mu$ satisfy for each functional $F$ the hypothesis of the last theorem, then
\[ \begin{cases} 
      \frac{\delta F}{\delta \rho} (\mu) = \phi_{\mu,\nu}   \text{ if } F(\mu) = \frac{d_2^2(\mu,\nu)^2}{2}, \\ \\
     \frac{\delta F}{\delta \rho} (\mu) =  2 (W \ast \mu) \text{ if } F(\mu) = \int_M \int_M W(x,y)d\mu(x) d\mu(y),
   \end{cases}\]
where as before $\phi_{\mu,\nu}$ is the Kantorovich potential whose negative gradient pushes $\mu$ to $\nu$ optimally with respect to $d^2/2$.
\end{lemma}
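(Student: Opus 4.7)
\emph{Interaction energy.} My plan for $F(\mu)=\int_M\int_M W(x,y)\,d\mu(x)\,d\mu(y)$ is direct bilinear expansion. To avoid a notational clash with the Wasserstein target $\nu$ appearing in the other functional, I will write the perturbation direction as $\eta := \tilde\rho - \mu$ (which has zero net mass) and substitute $\mu_\epsilon = \mu + \epsilon\eta$, yielding
$$F(\mu_\epsilon) - F(\mu) = \epsilon\left(\int\!\!\int W\,d\mu\,d\eta + \int\!\!\int W\,d\eta\,d\mu\right) + \epsilon^2\int\!\!\int W\,d\eta\,d\eta.$$
Since $W(x,y) = h(d(x,y)^2) = W(y,x)$, Fubini identifies both cross terms with $\int (W\ast\mu)\,d\eta$; dividing by $\epsilon$ and letting $\epsilon \to 0$, the quadratic remainder vanishes by boundedness of $W$ (Remark \ref{WLip}), and we obtain $\frac{\delta F}{\delta\rho}(\mu) = 2(W\ast\mu)$.

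\emph{Squared Wasserstein distance.} For $F(\mu) = d_2(\mu,\nu)^2/2$, I would use Kantorovich duality,
$$F(\mu) = \sup_{\phi(x)+\psi(y)\leq d(x,y)^2/2}\left\{\int\phi\,d\mu + \int\psi\,d\nu\right\},$$
and then apply a standard envelope-theorem sandwich. Letting $(\phi^0,\psi^0)$ denote an optimal dual pair for $(\mu,\nu)$ with $\phi^0 = \phi_{\mu,\nu}$, and $(\phi^\epsilon,\psi^\epsilon)$ an optimal pair for $(\mu_\epsilon,\nu)$, feeding the $\epsilon = 0$ pair into the variational problem at $\mu_\epsilon$ and vice versa yields
$$F(\mu) + \epsilon\!\int\!\phi^0\,d\eta \;\leq\; F(\mu_\epsilon) \;\leq\; F(\mu) + \epsilon\!\int\!\phi^\epsilon\,d\eta.$$
Dividing by $\epsilon>0$ sandwiches the difference quotient between $\int\phi^0\,d\eta$ and $\int\phi^\epsilon\,d\eta$, so it suffices to establish $\int\phi^\epsilon\,d\eta \to \int\phi^0\,d\eta$ as $\epsilon\to 0$.

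\emph{Main obstacle.} The technical heart is this last convergence of dual potentials as the source measure is perturbed. My plan is to exploit compactness of $M$ to normalize each $\phi^\epsilon$ as a $d^2/2$-concave function, which then inherits a Lipschitz bound controlled by $\mathrm{diam}(M)$; these potentials are uniformly bounded and uniformly Lipschitz, so Arzel\`a--Ascoli extracts a uniformly convergent subsequence $\phi^{\epsilon_n}\to\phi^\infty$. Passing to the limit in the duality inequalities and using $d_2$-continuity of $F$ at $\mu$ forces $(\phi^\infty,\psi^\infty)$ to be optimal at $(\mu,\nu)$, so $\phi^\infty$ and $\phi^0$ differ at most by an additive constant; since $\eta$ has zero mass, such constants drop out of $\int\phi\,d\eta$, and the sandwich closes to give $\frac{\delta F}{\delta\rho}(\mu) = \phi_{\mu,\nu}$.
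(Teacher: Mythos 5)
Your computation for the interaction energy coincides with the paper's: the same bilinear expansion in $\epsilon$, symmetry of $W$, and division by $\epsilon$, so there is nothing to add there. The difference lies in the squared Wasserstein term, which the paper does not prove at all but simply delegates to a citation (a particular case of Proposition 7.16 in Santambrogio); you instead reprove that cited fact via Kantorovich duality and the envelope sandwich, extracting a uniformly convergent subsequence of normalized $d^2/2$-concave potentials from the $\mathrm{diam}(M)$ Lipschitz bound and Arzel\`a--Ascoli, and passing to the limit in the dual problem. This is in substance the textbook proof of the proposition the paper invokes, so your route buys self-containedness at the cost of length, and it is sound except for one step you should make explicit: the identification ``$\phi^\infty$ and $\phi^0$ differ at most by an additive constant'' uses uniqueness of the Kantorovich potential up to additive constants, which is a genuine hypothesis (it appears as such in Santambrogio's statement and is implicit in the lemma's phrase ``the Kantorovich potential''). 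It is not automatic: in general an optimal potential is pinned down (up to a constant) only $\mu$-a.e.\ on $\spt(\mu)$, while $\int\phi\,d\eta$ also sees the values of $\phi$ on $\spt(\tilde\rho)$, which may lie outside $\spt(\mu)$; uniqueness holds, for instance, when $\mu$ is absolutely continuous with support all of $M$, and without some such assumption your sandwich only identifies the derivative against \emph{some} optimal potential rather than $\phi_{\mu,\nu}$. Stating that hypothesis (as the cited proposition does) closes the argument; with it, your subsequence argument upgrades to the full limit and the proof is complete.
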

\begin{proof}
The first computation can be found in \citep{santambrogio}, as a particular case of Proposition 7.16, while for the second one, note that
\begin{align*}
& F(\rho + \epsilon \nu) = \int_M \int_M W(x,y) d(\rho + \epsilon \nu)(x) d(\rho + \epsilon \nu)(y) \\
&= F(\rho) + \epsilon^2 F(\nu) + \epsilon \left( \int_M \int_M W(x,y) d\rho(x) d\nu(y) + \int_M \int_M W(x,y) d\rho(y) d\nu(x) \right). 
\end{align*}
where the result is obvious by dividing by $\epsilon$ and taking the limit. \\
Clearly if $W$ is symmetric, as in the case of assumptions \eqref{assumptions}, 
$$ \frac{\delta F}{\delta \rho }(\mu) = 2 \int_M W(x,y) d\mu (y) = 2 (W \ast \mu)  (x). $$
\end{proof}
The Kantorovich potentials are known to exist in general settings such as Polish spaces but the question of their regularity is usually more subtle (Chapter 10 \citep{VillaniOldAndNew}). We recall the concepts of semiconcavity/semiconvexity from non-smooth analysis for which we follow \citep{McCannCordero}.\\
In our context compactness of $M$ yields semiconcavity of both the Kantorovich potential and the convolution of the interaction which together yield differentiability of the infimal convolution as we will show in Lemma \ref{phidifferentiable}. 
\begin{definition} (Locally semi-concave) \\
Let $U \subseteq M$ be open, we say $f: U \to \mathbb{R}$ is semi-concave at $x_0$ if there exists a neighborhood of $x_0$ and a constant $ C \in \mathbb{R}$ such that for every $x \in U$ and $ v \in T_xM$
\begin{equation}
    \limsup_{r \to 0} \frac{f(\exp_x(rv)) + f(\exp_x(-rv)) - 2f(x) }{r^2} \leq C,
\end{equation}
where $\exp_x$ denotes the exponential at $x$.
\end{definition} 
\begin{remark} In \citep{McCannCordero} it is shown that semi-concave functions admit non-empty superdifferentials, which implies that semi-concavity together with semi-convexity yields differentiability. It is also shown there that $c$-concave functions are semi-concave (Proposition 3.14) and that $x \to d(x,y)^2$ is everywhere semi-concave but fails to be semi-convex at the cut locus (Proposition 2.5). We refer to \citep{McCannCordero} for details and proofs.
\end{remark}
\begin{lemma}\label{jointlysmooth} (Joint smoothness or Riemannian distance squared away from cut locus) \\
In the context of our smooth, connected compact manifold $(M,g)$, the square of Riemannian distance is smooth away from the cut locus, i.e. if $(x_0,y_0) \not \in Cut$ then $(x,y) \to d(x,y)^2$ is smooth in a neighborhood of $(x_0,y_0)$.
\end{lemma}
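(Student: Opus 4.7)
The plan is to realize $d(\cdot,\cdot)^2$ locally as the squared length of a smooth vector field obtained by inverting the exponential map. Consider the smooth map $E: TM \to M \times M$ defined by $E(x, v) = (x, \exp_x(v))$. Since $(x_0, y_0) \notin Cut$, there exists a unique $v_0 \in T_{x_0}M$ with $|v_0|_{x_0} = d(x_0, y_0)$ and $\exp_{x_0}(v_0) = y_0$, and moreover $v_0$ is not a conjugate vector of $\exp_{x_0}$ (otherwise $y_0$ would belong to $Cut(x_0)$). Consequently $dE_{(x_0, v_0)}$ is a linear isomorphism, so by the inverse function theorem $E$ admits a smooth local inverse on a neighborhood $U \times V$ of $(x_0, y_0)$, which takes the form $(x, y) \mapsto (x, v(x,y))$ with $v$ smooth and $v(x_0, y_0) = v_0$.

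Next I would verify that on a (possibly smaller) neighborhood, the geodesic $t \mapsto \exp_x(tv(x,y))$ on $[0,1]$ is the unique minimizing geodesic from $x$ to $y$, so that $d(x,y) = |v(x,y)|_x$. This uses the standard fact that $(M \times M) \setminus Cut$ is open together with continuity of $v$ and uniqueness of the minimizer at the basepoint: any competing minimizer for $(x,y)$ close to $(x_0,y_0)$ would either contradict the local injectivity of $E$ provided by the inverse function theorem, or force $(x,y) \in Cut$, which cannot happen after shrinking $U \times V$. With this identification in hand, the conclusion
\begin{equation*}
d(x,y)^2 = g_x(v(x,y), v(x,y))
\end{equation*}
exhibits $d^2$ as a smooth function of $(x,y)$ near $(x_0, y_0)$, since both the metric $g$ and the map $v$ are smooth.

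The only nontrivial step, although it is entirely standard Riemannian geometry, is verifying that the local inverse produced by the inverse function theorem selects the minimizing geodesic rather than some other geodesic connecting $x$ to $y$. Once this is secured via the openness of $(M \times M) \setminus Cut$ and the uniqueness of the minimizing geodesic at $(x_0,y_0)$, the smoothness assertion reduces to the chain rule applied to smooth objects.
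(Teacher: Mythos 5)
Your approach is essentially the paper's: the paper also inverts the map $(x,v)\mapsto(x,\exp_x(v))$ near $(x_0,v_0)$ using that $(x_0,y_0)$ avoids the (closed) cut locus, and then concludes smooth dependence on $(x,y)$; the only cosmetic difference is that the paper finishes by exhibiting the derivative of $d$ through the formula $-D(d(x,y))=(v_*/\lvert v_*\rvert_x,\,w_*/\lvert w_*\rvert_y)$ evaluated at $(v,w)=(\exp_x^{-1}(y),\exp_y^{-1}(x))$, following McCann, whereas you finish more directly with $d(x,y)^2=g_x(v(x,y),v(x,y))$. Both hinge on the same two facts: absence of conjugate points before the cut point (so the inverse function theorem applies), and the identification of the local branch $v(x,y)$ with the minimal vector.

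One caution about the step you flag as the only nontrivial one: the dichotomy you offer does not quite close it. If for some nearby $(x,y)$ the unique minimizing geodesic had initial vector $m$ far from $v_0$, then $(x,m)$ lies outside the neighborhood on which $E$ is injective, so local injectivity says nothing; and since the branch $v(x,y)$ is not yet known to be minimizing, there are not two distinct minimizers, so no contradiction with $(x,y)\notin Cut$ arises either. The standard repair is a compactness argument showing the minimal vector depends continuously at $(x_0,y_0)$: if $(x_n,y_n)\to(x_0,y_0)$ with minimal vectors $m_n$, then $\lvert m_n\rvert_{x_n}=d(x_n,y_n)$ is bounded, any subsequential limit $w$ satisfies $\exp_{x_0}(w)=y_0$ and $\lvert w\rvert_{x_0}=d(x_0,y_0)$, hence $w=v_0$ by uniqueness of the minimizer at $(x_0,y_0)$. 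Thus for $(x,y)$ close enough, $(x,m(x,y))$ lies in the injectivity neighborhood, and $E(x,m(x,y))=(x,y)=E(x,v(x,y))$ forces $m(x,y)=v(x,y)$, giving $d(x,y)=\lvert v(x,y)\rvert_x$ and completing your argument.
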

\begin{proof} This lemma can be understood as analogous to Theorem 3.6 in \citep{McCannLorenz} part c), so we follow the proof from there. Note that if $(x,y)$ is near $(x_0,y_0) $ because the cut locus is closed, there exists a ball around $(x_0,y_0)$ not intersecting the cut locus. Note that the function $(x,v) \to (x,\exp_x(v))$ acts as a diffeomorphism near $(x_0,v_0)$ where we define $ v_0 = \exp_{x}^{-1}(y_0)$. Note that $ t \to \exp_x(tv_0)$ is the geodesic joining $x$ and $y_0$ and not $x_0$ and $y_0$. By symmetry, $(y_0,w_0) \to \exp_y(w_0)$ acts as a diffemorphism where again $w_0 = \exp_{y}^{-1}(x_0)$. Given $ z \in T_xM$ denote by $z_* = \langle z, \cdot \rangle_x \in T_x^*M$ then note that $-D(d(x,y)) = (v_*/\lvert v_*\rvert_x, w_*/\lvert w_*\rvert_y)\lvert_{(v,w) = (\exp_x^{-1}(y),\exp_y^{-1}(x))} $ as one can see by noting that $v_0,w_0$ are the tangent vectors of the geodesic (that exists as $(x_0,y_0) \not \in Cut$) and the operator notation $v_*$ and $w_*$ turns $v,w$ to covectors so that they belong to $T^*xM$ and $T_y^*M$ respectively, (see again \citep{McCannLorenz}), from which we conclude all components depend smoothly on $(x,y)$ yielding the result.
\end{proof}
\begin{lemma} (Semiconcavity of convolution) \label{Wastmuissemiconcave} \\
Let $\mu \in \mathcal{P}_2(M)$ Borel, assume that $W(x,y)$ satisfies assumptions \ref{h0}-\ref{hisincreasing} with $r_h \leq t_{inf}$, i.e. 
$h$ is non-decreasing on $[t_{inf},diam(M)]$, then everywhere on $M$ the function $ x \to (W \ast \mu )(x) $ is semiconcave. 
\end{lemma}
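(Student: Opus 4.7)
The approach is to establish pointwise semiconcavity of $W_y(x):=h(d(x,y)^2)$ at every $x_0\in M$ with a constant $C$ independent of $y$, and then transfer this uniform estimate to $W*\mu$ by integrating against $d\mu(y)$. Fix $x_0, y\in M$; the plan is to split into two cases according to whether $d(x_0,y)^2$ lies below or above the threshold $t_{inf}$.

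In the regime $d(x_0,y)^2<t_{inf}$, the hypothesis $r_h\leq t_{inf}$ combined with the closedness of the cut locus ensures that a whole neighborhood of $x_0$ stays disjoint from $Cut(y)$. Lemma \ref{jointlysmooth} then makes $d(\cdot,y)^2$ smooth on that neighborhood, so $W_y=h\circ d(\cdot,y)^2$ is $C^2$ there. A compactness argument on the closed set $\{(x,y):d(x,y)^2\leq t_{inf}-\varepsilon\}$ (for small fixed $\varepsilon>0$) gives a bound on the Hessian of $W_y$ uniform in $y$, and hence a $y$-uniform pointwise second-difference estimate in this regime.

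In the complementary regime $d(x_0,y)^2\geq t_{inf}$, one can no longer rely on smoothness of $\phi(x):=d(x,y)^2$, but $\phi$ is everywhere semiconcave with a $y$-uniform constant $C_\phi$ (Proposition 2.5 of \citep{McCannCordero}) and $1$-Lipschitz in $x$ (Remark \ref{WLip}). Setting $a:=\phi(\exp_{x_0}(rv))-\phi(x_0)$ and $b:=\phi(\exp_{x_0}(-rv))-\phi(x_0)$, semiconcavity gives $a+b\leq C_\phi r^2|v|^2$ and the Lipschitz estimate gives $|a|,|b|\leq L_\phi r|v|$. Taylor expanding $h$ around $\phi(x_0)$ then yields
\begin{equation*}
W_y(\exp_{x_0}(rv))+W_y(\exp_{x_0}(-rv))-2W_y(x_0)\leq h'(\phi(x_0))(a+b)+\tfrac{1}{2}\|h''\|_\infty(a^2+b^2),
\end{equation*}
and using the case assumption together with the monotonicity hypothesis to obtain $h'(\phi(x_0))\geq 0$, the right side is bounded by $Cr^2|v|^2$ for a $y$-uniform constant $C$.

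Combining the two cases gives a pointwise inequality $W_y(\exp_{x_0}(rv))+W_y(\exp_{x_0}(-rv))-2W_y(x_0)\leq Cr^2|v|^2$ valid for all sufficiently small $r$, with $C$ independent of $y$; integrating against $\mu$ and invoking reverse Fatou (with the uniform majorant $C|v|^2$ after dividing by $r^2$) transfers the bound to $W*\mu$ and yields the semiconcavity claim. The main obstacle I expect is gluing the two regimes at the boundary $d^2=t_{inf}$, where $\phi$ loses smoothness exactly as the monotonicity of $h$ takes over; this is precisely what the overlap provided by the gap $r_h\leq t_{inf}$ is designed to accommodate, so a careful choice of the thickening $\varepsilon$ in case one together with the Taylor argument in case two should suffice.
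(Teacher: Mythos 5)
Your proof is correct and follows essentially the same route as the paper: the same near/far split of the $y$-integral at the threshold where monotonicity of $h$ takes over, smoothness of $d(\cdot,y)^2$ off the cut locus plus compactness for the near part, uniform semiconcavity of $d(\cdot,y)^2$ together with $h'\geq 0$ for the far part, and a reverse-Fatou passage to the convolution. The only real difference is cosmetic: where the paper cites the chain rule for supergradients (Lemma 5 of \citep{McCann}, Corollary 3.13 of \citep{McCannCordero}) to obtain the $y$-uniform second-difference bound in the far regime, you rederive it directly by Taylor expanding $h$ and combining the Lipschitz and semiconcavity estimates for $d(\cdot,y)^2$.
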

\begin{proof}
Note that we can decompose the convolution: 
\begin{equation} \label{separateconvolution}
    W\ast \mu (x) = \int_{ \{y: d(x,y) \leq \sqrt{r_h} \}} W(x,y) d\mu(y) + \int_{\{y: d(x,y) > \sqrt{r_h}\}} W(x,y) d\mu(y).
\end{equation}
As in \citep{McCannCordero} if $ d(x_0,y) < t_{Cut(x_0)}$ then $ x \to d(x,y)^2$ is smooth at $x_0$ and so $ h(d(x,y)^2)$ is semiconcave as a $C^2$ function on a compact set has a bounded Hessian. Notice that the bound may depend on $y$ but $ (x,y) \to d(x,y)^2$ is jointly smooth away from the cut locus by Lemma \ref{jointlysmooth} so $ y \to \hess_xW(x,y)$ is continuous and because $M$ is complete, $ \{ y: d(x,y) \leq \sqrt{r_h} \}$ is compact and hence the $x$-Hessian of the first term in \eqref{separateconvolution} is uniformly upper bounded. \\
For the second term of \eqref{separateconvolution}, note that in the region $\{y: d(x,y) > \sqrt{r_h} \} $ the assumption \ref{hisincreasing} ensures the hypothesis of the Chain rule for supergradients (Lemma 5 in \citep{McCann}) is satisfied for $ x \to h(d(x,y)^2) $, as $ x \to d(x,y)^2$ is everywhere semiconcave, $h$ is twice differentiable there and $ r \to h(r)$ is non-decreasing on $ \{ r > r_h \}$, as in \citep{McCannCordero} Corollary 3.13 there exists $ C > 0$ such that for every $(x,y) \in M$ and $ u \in T_xM$ with $d(x,y) > c$,
\begin{equation} \label{finitedifr}
    \limsup_{r \to 0^+} \frac{W(\exp_x(ru),y) + W(\exp_x(-ru),y) - 2W(x,y)}{r^2} \leq C.
\end{equation}
Let us use the following notation:
\begin{equation} 
    f_r(x,y,u) : = \frac{W(\exp_x(ru),y) + W(\exp_x(-ru),y) - 2W(x,y)}{r^2}.
\end{equation}
To show semi-concavity of the convolution, we will use Fatou's Lemma, for which it is enough to show there exists a constant that bounds $\{ f_r(x,y,u)\}_{r > 0}$ in the region $\{y: d(x,y)\geq \sqrt{ r_h} \}$ so that we can take the limit superior (as $r \to 0$) inside of the integral.\\
As \eqref{finitedifr} holds for every $(x,y)$ in the given region, by Lemma 3.11 in \citep{McCannCordero} we obtain that there exists $r^*$ and a smooth function $V$ such that $ x \to W(x,y) + V(x)$ is actually geodesically $C$-concave for every $y \in \{y: d(x,y) \geq r_h \}$. When $ r < r^*$ by $C$-concavity (as $x$ is the midpoint between $\exp_x(ru)$ and $\exp_x(-ru)$, the definition yields $f_r(x,y,u) \leq C$ for all quadruples $(r,x,y,u) \in \{(r,x,y,u): 0 < r, d(x,y) > c, u \in T_xM \}$, so in this region, for fixed $x \in M$, the functions $ y \to f_r(x,y,u)$ satisfy $ f_r(x,y,u) \leq C$ and therefore using reverse Fatou's Lemma: 
\begin{eqnarray*}
&\displaystyle \limsup_{r \to 0}  \int_{ \{ y: d(x,y)^2 > r_h \}}  \frac{W(\exp_x(ru),y) + W(\exp_x(-ru),y) - 2W(x,y) }{r^2} d\mu(y) \\
& \hspace{1cm} \leq \displaystyle \int_{ \{ y: d(x,y)^2 > r_h \}} C d\mu(y) \leq C,
\end{eqnarray*}
which is semi-concavity of the convolution (with the same constant) as desired.
\end{proof}
\begin{lemma} (Differentiability of Kantorovich potentials in the whole support) \label{phidifferentiable} \\
Let $\phi_{k,k+1}$ be the Kantorovich potential from $\mu_{k}^{\tau}$ to $\mu_{k+1}^{\tau}$, then it's $c$-transform $\phi_{k,k+1}^{c}$ is differentiable at $x$ for every $ x \in \spt(\mu_{k+1}^{\tau})$. 
\begin{proof} By optimality (Lemma \ref{optimality}) we know that for every $ x \in \spt(\mu_{k+1}^{\tau})$
$$ \frac{\phi_{k,k+1}^{c}}{\tau}(x) + W \ast \mu_{k+1}^{\tau} (x)  \geq  \Tilde{c}  \text{ with equality on } \spt(\mu_{k+1}^{\tau}).  $$
By definition of the infimal convolution $\phi_{k,k+1}^c$ is $c-$concave and hence semiconcave as in Proposition 3.14 in \cite{McCannCordero}. \\
By assumptions \ref{h0}-\ref{hisincreasing} we apply Lemma \ref{Wastmuissemiconcave} to conclude that $ W \ast \mu_{k+1}^{\tau} (x)$ is semiconcave. \\
Hence, everywhere in $\spt(\mu_{k+1}^{\tau})$, $\phi_{k,k+1}^c = \tau( \Tilde{c} - W \ast \mu_{k+1}^{\tau}) $ is also semiconvex  meaning that  $\phi_{k,k+1}^{c}$ is both semiconcave and semiconvex and hence continuously differentiable at $x \in \spt(\mu^{\tau}_{k+1})$. 
\end{proof} 
\end{lemma}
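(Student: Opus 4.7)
The plan is to combine the Euler--Lagrange optimality from Lemma \ref{optimality} with a two--sided regularity argument. The optimality identity pins down $\phi_{k,k+1}^{c}/\tau + W \ast \mu_{k+1}^{\tau}$ to be the constant $\tilde{c}$ on $\spt(\mu_{k+1}^{\tau})$, so on the support one has the pointwise equality
$$ \phi_{k,k+1}^{c}(x) = \tau\bigl(\tilde{c} - W \ast \mu_{k+1}^{\tau}(x)\bigr). $$
The heart of the matter is that each side of this identity carries an asymmetric kind of regularity: the left side is automatically $c$--concave, while the right side is the negative of a semiconcave function. My strategy is to show semiconcavity everywhere and semiconvexity on the support, and then use the standard non--smooth analysis fact that the two together force differentiability.

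First I would note that $\phi_{k,k+1}^{c}$ is by definition the infimal convolution of $\phi_{k,k+1}$ with $d^{2}/2$ and hence $c$--concave; by Proposition 3.14 of \citep{McCannCordero} every $c$--concave function on $M$ is semiconcave, so $\phi_{k,k+1}^{c}$ is semiconcave on all of $M$. Next, Lemma \ref{Wastmuissemiconcave} provides semiconcavity of $x \mapsto W \ast \mu_{k+1}^{\tau}(x)$ on $M$, using exactly the assumptions \ref{h0}--\ref{hisincreasing} that make the convolution well behaved away from the repulsive region and allow the chain rule for supergradients to handle the non--smooth contribution near the cut locus.

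Then I would transfer these regularities via the optimality identity. Because $\phi_{k,k+1}^{c}(x) = \tau(\tilde{c} - W \ast \mu_{k+1}^{\tau}(x))$ on $\spt(\mu_{k+1}^{\tau})$, and because $-W\ast \mu_{k+1}^{\tau}$ is semiconvex (being the negative of a semiconcave function), I conclude that $\phi_{k,k+1}^{c}$ is semiconvex on $\spt(\mu_{k+1}^{\tau})$. Combining with the global semiconcavity from the previous step, at every $x \in \spt(\mu_{k+1}^{\tau})$ the function $\phi_{k,k+1}^{c}$ is simultaneously semiconcave and semiconvex; sandwiching the two quadratic bounds on the second difference quotient shows both the sub-- and the superdifferential are nonempty, which (again via \citep{McCannCordero}) forces continuous differentiability at $x$.

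The main obstacle in this scheme is not in the present lemma but in its two inputs: one needs the Euler--Lagrange identity from Lemma \ref{optimality} to enforce the pointwise equality, and one needs Lemma \ref{Wastmuissemiconcave} to obtain the delicate semiconcavity of $W \ast \mu_{k+1}^{\tau}$ in the presence of the cut locus. Once those are in hand, the present result is essentially a soft book--keeping argument in non--smooth analysis.
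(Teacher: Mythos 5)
Your proposal is correct and follows essentially the same route as the paper: invoke the Euler--Lagrange identity to equate $\phi_{k,k+1}^{c}$ with $\tau(\tilde{c} - W \ast \mu_{k+1}^{\tau})$ on the support, get semiconcavity of $\phi_{k,k+1}^{c}$ from $c$-concavity (Proposition 3.14 of \citep{McCannCordero}) and semiconvexity from Lemma \ref{Wastmuissemiconcave}, and conclude differentiability from the two together. The only difference is cosmetic --- you spell out the nonemptiness of sub- and superdifferentials, which the paper leaves to the cited reference.
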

\begin{lemma} \label{optimality}(First variations for the minimizing movement scheme) \\
Let $\{ \mu_k^{\tau} \}_{k \in \mathbb{N}}$ be the minimizing movement scheme with initial measure $\mu_0 \in \mathcal{P}_{ac}^2(M)$, let $\phi_{k,k+1}$ be the Kantorovich potential for which the exponential of it's negative gradient pushes $\mu_{k}^{\tau}$ onto $\mu_{k+1}^{\tau}$, then on the support of $\mu_{k+1}^{\tau}$
$$ -\frac{\nabla_M \phi_{k,k+1}^{c}}{\tau} =  \nabla_M (W \ast \mu_{k+1}^{\tau}).$$
\end{lemma}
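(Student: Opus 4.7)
The plan is to apply the optimality criterion of Theorem \ref{firstvar} to the functional
$$ F(\rho) := E_W(\rho) + \frac{d_2^2(\rho, \mu_k^{\tau})}{2\tau}, $$
for which $\mu_{k+1}^{\tau}$ is a minimizer by the very definition of the scheme \eqref{minimizingmovements}, and then to differentiate the resulting constancy relation on $\spt(\mu_{k+1}^{\tau})$.

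First I compute $\delta F/\delta \rho$ at $\mu_{k+1}^{\tau}$ using linearity together with the preceding lemma on first variations. The $E_W$ piece contributes $W \ast \mu_{k+1}^{\tau}$, where the factor $1/2$ in the definition of $E_W$ absorbs the factor $2$ that appears in the general double-integral computation. The Wasserstein piece contributes $\phi_{k,k+1}^{c}/\tau$: in the paper's convention $\phi_{k,k+1}$ is the Kantorovich potential whose negative gradient transports $\mu_k^{\tau}$ onto $\mu_{k+1}^{\tau}$, so its $c$-transform $\phi_{k,k+1}^{c}$ is the Kantorovich potential realising the transport from $\mu_{k+1}^{\tau}$ back to $\mu_k^{\tau}$, which is exactly the first variation at $\mu_{k+1}^{\tau}$ of $\rho \mapsto d_2^2(\rho, \mu_k^{\tau})/2$. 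The hypotheses of Theorem \ref{firstvar} are easy to verify on the compact manifold $M$: $F((1-\epsilon)\mu_{k+1}^{\tau} + \epsilon \rho)$ stays finite because $W$ is bounded (Remark \ref{Lowerboundk}) and $d_2 \leq diam(M)$, while continuity of the first variation follows from Lipschitzness of $W$ (Remark \ref{WLip}) and from the $c$-concavity hence continuity of $\phi_{k,k+1}^{c}$ on the compact $M$.

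Theorem \ref{firstvar} then produces a constant $\tilde{c} \in \mathbb{R}$ such that
$$ W \ast \mu_{k+1}^{\tau}(x) + \frac{\phi_{k,k+1}^{c}(x)}{\tau} \geq \tilde{c} \quad \text{on } M, $$
with equality on $\spt(\mu_{k+1}^{\tau})$. Setting $g(x) := W \ast \mu_{k+1}^{\tau}(x) + \phi_{k,k+1}^{c}(x)/\tau$, this means $g$ attains an interior minimum of $M$ at every point of $\spt(\mu_{k+1}^{\tau})$. Lemma \ref{Wastmuissemiconcave} gives semi-concavity of $W \ast \mu_{k+1}^{\tau}$, and Lemma \ref{phidifferentiable} gives differentiability of $\phi_{k,k+1}^{c}$ on $\spt(\mu_{k+1}^{\tau})$, so $g$ is differentiable at each such point and therefore $\nabla_M g = 0$ there. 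Rearranging yields
$$ -\frac{\nabla_M \phi_{k,k+1}^{c}(x)}{\tau} = \nabla_M (W \ast \mu_{k+1}^{\tau})(x), \quad x \in \spt(\mu_{k+1}^{\tau}), $$
as claimed.

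The routine but delicate step is the bookkeeping of signs and conventions in the first variation of the Wasserstein term: perturbing the second argument of $d_2^2(\mu_k^{\tau}, \mu_{k+1}^{\tau})$ produces the potential transporting the second marginal back onto the first, namely $\phi_{k,k+1}^{c}$ rather than $\phi_{k,k+1}$. Once that direction convention is pinned down, the remainder is a standard Euler-Lagrange argument together with the semi-concave/semi-convex decomposition established in the preceding lemmas.
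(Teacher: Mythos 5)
Your proposal is correct and follows essentially the same route as the paper: apply Theorem \ref{firstvar} to $F(\rho)=E_W(\rho)+\frac{d_2^2(\rho,\mu_k^{\tau})}{2\tau}$, identify the first variation as $W\ast\mu_{k+1}^{\tau}+\frac{\phi_{k,k+1}^{c}}{\tau}$, and differentiate the resulting constancy relation on $\spt(\mu_{k+1}^{\tau})$ via the differentiability supplied by Lemma \ref{phidifferentiable}. Your minimum-point argument (using $g\geq\tilde{c}$ on all of $M$ with equality on the support to conclude $\nabla_M g=0$ there) is a slightly more careful rendering of the paper's terse ``take the gradient on both sides,'' and your verification of the hypotheses of Theorem \ref{firstvar} fills in details the paper leaves implicit.
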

\begin{proof}
To agree with notation, let $F(\nu) = \frac{d_2^2(\nu,\mu_{k}^{\tau})}{2\tau} + E(\nu) $. Theorem \eqref{firstvar} says

$$ \frac{\phi_{k,k+1}^{c}}{\tau} + W \ast \mu_{k+1}^{\tau} = \frac{\delta F}{\delta \rho}(\mu_{k+1}^{\tau}) = c \text{ on } \spt(\mu_{k+1}^{\tau}). $$
Using the previous lemma taking the gradient on both sides gives the result.
\end{proof}
\section{Finite speed of propagation and proof of the main theorem \ref{existenceanduniqueness}}
In this section we take a look at a consequence of the Euler-Lagrange condition that will ensure that the evolution of the measures is controlled. This proposition is key to ensure differentiability of the interaction potential needed to conclude convergence in the continuity equation. 
\begin{proposition}(Finite speed of propagation in the minimizing movement scheme) \label{finitespeed} \\
Given $ \tau > 0$ and $ \mu_k^{\tau} \in \mathcal{P}_2(M)$, let $ L > 0$ denote the Lipschitz constant of the potential from \ref{h0}-\ref{Wishd}, if 
$$ \mu_{k+1}^{\tau} \in \argmin_{\rho \in \mathcal{P}_2(M)} \left\{ \frac{1}{2} \int \int W(x,y) d\rho d\rho + \frac{1}{2\tau} d_2(\mu_k^{\tau},\rho)^2 \right\} $$
we have $$ \spt(\mu_{k+1}^{\tau}) \subseteq \{ x \in M: d(x,\spt(\mu_{k}^{\tau})) \leq L \tau \}. $$
\end{proposition}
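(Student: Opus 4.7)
The plan is to combine the Euler--Lagrange identity from Lemma \ref{optimality} with the Lipschitz bound on $W$ from Remark \ref{WLip}, so as to control the displacement of the optimal transport that sends $\mu_{k+1}^{\tau}$ back onto $\mu_k^{\tau}$. The key observation is that at a support point this displacement equals $|\nabla \phi_{k,k+1}^{c}|$, and the optimality condition forces this quantity to be no larger than $L\tau$, which is exactly what the statement asserts.

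Concretely, I would fix an arbitrary $y\in\spt(\mu_{k+1}^{\tau})$. By Lemma \ref{phidifferentiable}, the $c$-transform $\phi_{k,k+1}^{c}$ is continuously differentiable at $y$, and Lemma \ref{optimality} gives
\[ -\frac{\nabla\phi_{k,k+1}^{c}(y)}{\tau} \;=\; \nabla(W\ast\mu_{k+1}^{\tau})(y). \]
Since every slice $W_{z}(\cdot)=W(\cdot,z)$ is $L$-Lipschitz with the same constant (Remark \ref{WLip}), the convolution $W\ast\mu_{k+1}^{\tau}$ is also $L$-Lipschitz, so wherever its gradient exists it satisfies $|\nabla(W\ast\mu_{k+1}^{\tau})(y)|\le L$. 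Combining these two facts immediately yields $|\nabla\phi_{k,k+1}^{c}(y)|\le L\tau$ on $\spt(\mu_{k+1}^{\tau})$. Next I would identify $\exp_{y}(\nabla\phi_{k,k+1}^{c}(y))$ as a point of $\spt(\mu_{k}^{\tau})$: specialising the formula of Remark \ref{interpolation} to $t=k\tau$ shows that the map $F(y):=\exp_{y}(\nabla\phi_{k,k+1}^{c}(y))$ pushes $\mu_{k+1}^{\tau}$ forward to $\mu_{k}^{\tau}$, so $F(y)\in\spt(\mu_{k}^{\tau})$ for $\mu_{k+1}^{\tau}$-a.e.\ $y$. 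Continuous differentiability of $\phi_{k,k+1}^{c}$ on $\spt(\mu_{k+1}^{\tau})$ makes $F$ continuous there, and since $\spt(\mu_{k}^{\tau})$ is closed the inclusion extends to every $y\in\spt(\mu_{k+1}^{\tau})$. The conclusion follows from $d(y,F(y))=|\nabla\phi_{k,k+1}^{c}(y)|\le L\tau$.

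The main obstacle I anticipate is the last step: bridging the purely measure-theoretic statement $F_{\#}\mu_{k+1}^{\tau}=\mu_{k}^{\tau}$ and the pointwise statement $F(\spt(\mu_{k+1}^{\tau}))\subseteq\spt(\mu_{k}^{\tau})$. A priori the push-forward identity determines $F$ only up to a $\mu_{k+1}^{\tau}$-null set, whereas the finite-speed claim must hold at every support point. The pointwise Euler--Lagrange identity guaranteed by Lemma \ref{phidifferentiable}, together with continuity of $F$ on the closed set $\spt(\mu_{k+1}^{\tau})$, is precisely what lets one promote the almost-everywhere inclusion to every support point. This is the step that actually uses the semiconcave/semiconvex machinery of Lemmas \ref{Wastmuissemiconcave} and \ref{phidifferentiable}; without it, one would only be able to conclude the propagation bound up to a $\mu_{k+1}^{\tau}$-null exceptional set, which would be insufficient for the subsequent application to differentiability of the interaction potential on supports.
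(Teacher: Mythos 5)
Your argument is essentially the paper's own proof: both use the Euler--Lagrange identity of Lemma \ref{optimality} to rewrite $-\nabla\phi_{k,k+1}^{c}/\tau$ as $\nabla(W\ast\mu_{k+1}^{\tau})$, bound this by the Lipschitz constant $L$ from Remark \ref{WLip}, and interpret $\exp_x(\nabla\phi_{k,k+1}^{c}(x))$ as the optimal map sending $\mu_{k+1}^{\tau}$ back onto $\mu_k^{\tau}$ so that each support point moves at most $L\tau$. Your extra step promoting the $\mu_{k+1}^{\tau}$-a.e.\ inclusion $F(y)\in\spt(\mu_k^{\tau})$ to every support point via continuity of $F$ and closedness of the support is a welcome refinement of a point the paper passes over quickly, not a different approach.
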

\begin{proof}
By Lemma \ref{optimality} we know we can compute the gradient of $\phi_{k,k+1}^{c}$, which is defined $\mu_{k+1}^{\tau}$ everywhere on $\spt(\mu_{k+1}^{\tau})$ and hence the map $ x \to \exp_x(\nabla \phi_{k,k+1}^c (x))$ is well defined and supported in the subdifferential of a $c-$concave map $(\phi_{k,k+1}^c)$, by the converse in proposition 1.30 in \citep{Users} we get that this map is optimal and pushes $\mu_{k+1}^{\tau}$ to $\mu_k^{\tau}$ meaning that 
\begin{equation}\label{wassphi}
d_2(\mu_{k+1}^{\tau}, \mu_{k}^{\tau})^2 = \int_M \lvert \nabla \phi_{k,k+1}^{c} \rvert ^2 d\mu_{k+1}^{\tau}.
\end{equation} 
Now by assumptions \ref{h0}-\ref{Wishd} we know that (Remark \ref{WLip}) $ y \to W(x,y)$ is Lipschitz for every $ x \in \spt(\mu_{k+1}^{\tau})$, from which the convolution is Lipschitz with the same constant $L$, as $\mu_{k+1}^{\tau}$ is a probability measure, i.e.
$$ \lvert W \ast \mu_{k+1}^{\tau} (x_1) - W \ast \mu_{k+1}^{\tau} (x_2) \rvert  \leq L d(x_1,x_2).$$
Consequently, because the norm of the gradient is bounded by the metric derivative (see Chapter 1 in \citep{Ambrosio}), we obtain that  for every $x \in \spt(\mu_{k+1}^{\tau})$
\begin{equation} \label{10bis}
\lvert \nabla_M (W \ast \mu_{k+1}^{\tau}) (x) \rvert_x \leq L .
\end{equation}
But by Lemma \ref{optimality} this means that 
$$ d(x,\exp_x(\nabla \phi_{k,k+1}^{c}(x))) = \lvert \nabla \phi_{k,k+1}^{c}(x) \rvert \leq L \tau .$$
Note that $L$ is the global Lipschitz constant of the interaction potential and hence independent of $\tau$. Consequently every point $x \in \spt(\mu_{k+1}^{\tau})$ is transported a distance of at most $L\tau$ from which triangle inequality yields the result. 
\end{proof}
\begin{corollary}
(Small time differentiability of the potential) \label{SmallTime} \\
Fix $\tau > 0$, let $\mu_0 \in \mathcal{P}_{ac}^2(M)$ with $\delta_{\mu_0} > 0$, where $\delta_{\mu_0}$ is the distance to cut from Definition \ref{distancecut}, then for all $ t \in [0, \lfloor\frac{\delta_{\mu}}{2L \tau} \rfloor]$ the function $ x \to W(x,y)$ is differentiable in the support of $\geo(\{\mu_k^{\tau}\})(t)$ where $\mu_{k}^{\tau}$ is the minimizing movement scheme at level $\tau$ defined in \eqref{minimizingmovements}.
\end{corollary}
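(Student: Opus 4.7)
The plan is to iterate Proposition \ref{finitespeed} to control how $\spt(\mu_k^\tau)$ spreads with $k$, to extend this control to the geodesic interpolation via the pointwise bound on $\lvert \nabla \phi_{k,k+1}^c \rvert$ implicit in Lemma \ref{optimality}, and finally to invoke Lemma \ref{jointlysmooth} to conclude differentiability of $W$ away from $Cut$.

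First, a straightforward induction on $k$ using Proposition \ref{finitespeed} gives
$$\spt(\mu_k^\tau) \subseteq \{x \in M : d(x, \spt(\mu_0)) \leq k L \tau\}.$$
To extend this bound to the geodesic interpolation, I would observe that Lemma \ref{optimality} together with the Lipschitz estimate \eqref{10bis} forces $\lvert \nabla \phi_{k,k+1}^c(x) \rvert \leq L\tau$ for every $x \in \spt(\mu_{k+1}^\tau)$. Since the map defining $\geo(\{\mu_k^\tau\})(t)$ on $[k\tau,(k+1)\tau]$ displaces each $x \in \spt(\mu_{k+1}^\tau)$ along a geodesic of length at most $\lvert \nabla \phi_{k,k+1}^c(x) \rvert$, the interpolated support sits in an $L\tau$-neighborhood of $\spt(\mu_{k+1}^\tau)$, hence within at most $(k+1)L\tau$ of $\spt(\mu_0)$ throughout the whole interval.

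Next, I would translate this spread estimate into avoidance of the cut locus through Definition \ref{distancecut}. Given any pair $x,y \in \spt(\geo(\{\mu_k^\tau\})(t))$, compactness of $\spt(\mu_0)$ produces $x_0, y_0 \in \spt(\mu_0)$ with $d(x,x_0) + d(y,y_0) \leq 2(k+1)L\tau$. In the regime of the statement, $k+1 \leq \lfloor \delta_{\mu_0}/(2L\tau)\rfloor$, this sum is strictly below $\delta_{\mu_0}$, and then the contrapositive of Definition \ref{distancecut} rules out $(x,y) \in Cut$: otherwise $(x,y)$ itself would realize a sum-distance strictly less than $\delta_{\mu_0}$ from a pair in $\spt(\mu_0)^2$, contradicting the defining infimum.

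Finally, on any $(x,y) \notin Cut$, Lemma \ref{jointlysmooth} gives joint smoothness of $(x,y)\mapsto d(x,y)^2$ in a neighborhood, and composing with $h \in C^1([0,\mathrm{diam}(M)]) \cap C^2_{loc}((0,\mathrm{diam}(M)))$ delivers differentiability of $x \mapsto W(x,y)$; the diagonal $x=y$ is automatic, since $d(x,\cdot)^2$ vanishes to second order at $x$ and $h'(0)$ exists from the right, so the gradient is simply zero there. The only delicate point I expect is aligning the strict-versus-nonstrict inequalities encoded by the floor $\lfloor \delta_{\mu_0}/(2L\tau)\rfloor$ and tracking the interpolation's worst-case excursion from $\spt(\mu_0)$; the geometric content reduces cleanly to the interplay between the per-step displacement bound $L\tau$ from Proposition \ref{finitespeed} and Definition \ref{distancecut}.
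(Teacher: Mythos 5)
Your argument follows essentially the same route as the paper's proof: iterate the finite speed of propagation of Proposition \ref{finitespeed} to bound the spread of $\spt(\mu_k^{\tau})$, combine it with the triangle inequality and Definition \ref{distancecut} to keep the supports away from $Cut$ as long as $2kL\tau < \delta_{\mu_0}$, and conclude differentiability of $x \mapsto W(x,y)$ there. In fact you make explicit two points the paper leaves implicit (the control of the geodesic interpolation's support via the bound $\lvert \nabla \phi_{k,k+1}^{c}\rvert \leq L\tau$, and the behaviour on the diagonal), so the proposal is correct and consistent with the paper's argument.
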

\begin{proof}
Observe that the finite speed of propagation (Proposition \ref{finitespeed})  immediately ensures that if $\mu_0 \in \mathcal{P}_{ac}(M)$ with $ \delta_{\mu_0} > 0$ then
$$ \spt(\mu_{1}^{\tau}) \subseteq \{ x \in M: d(x,\spt(\mu_0)) \leq L\tau \}. $$
Consequently, as we have seen in Lemma \ref{phidifferentiable} the map $ x \to \exp_x( \nabla \phi_{k,k+1}^c(x))$ is well defined and is an optimal transport map (with cost $d^2/2$) which means that we can apply finite speed of propagation (Lemma \ref{finitespeed}) at every $k$ and hence, 
$$ \spt(\mu_{k}^{\tau}) \subseteq \{x \in M: d(x,\spt(\mu_0)) \leq  Lk \tau \}. $$
For $k \in \mathbb{N}$, consider $(x,y) \in \spt(\mu_0)^2, (x_k,y_k) \in \spt(\mu_{k}^{\tau})^2$ and $(x',y') \in Cut$.
By definition of $\delta_{\mu_0}$, 
\begin{equation*}
    d(x,x') + d(y,y') \geq \delta_{\mu_0}
\end{equation*}
Using the triangle inequality twice, 
\begin{equation}
    d(x_k,x') + d(x_k,x) + d(y_k,y') + d(y_k,y) \geq \delta_{\mu_0}.
\end{equation}
By finite speed of propagation $2k$-times, 
\begin{equation*}
    2kL\tau + d(x_k,x') + d(y_k,y') \geq \delta_{\mu_0}.
\end{equation*}
Now using definition \ref{distancecut}, because $\delta_{\mu^{\tau}_k}$ is an infimum, $(x_k,y_k)$ and $(x',y')$ are arbitrary, 
\begin{equation}
    \delta_{\mu_{k}^{\tau}} \geq \delta_{\mu_0} - 2k \tau L. 
\end{equation}
Hence, as long as $\delta_{\mu_0} - 2k\tau L > 0$, the geodesic interpolation \ref{interpolation} guarantees that  $x \to W(x,y)$ is differentiable in the support of the measures up to $\mu_{k}^{\tau}$. \\
Notice that $\delta_{\mu_0} - 2k\tau L > 0 $ occurs exactly when $ k < \delta_{\mu_0}/(2L\tau)$ as desired.
\end{proof}
\begin{lemma} (Contraction of Wasserstein distances for product measures) \label{contractd1} \\
Let $\mu,\nu \in \mathcal{P}_1(M)$, denote by $\mu \otimes \mu$ and $\nu \otimes \nu$ the product measures on $M \times M$, then
$$ d_1(\mu \otimes \mu ,\nu \otimes \nu) \leq 2 d_1(\mu , \nu) $$
\end{lemma}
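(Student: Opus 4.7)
The strategy is to exhibit an explicit coupling between $\mu \otimes \mu$ and $\nu \otimes \nu$ built out of an optimal coupling for $d_1(\mu,\nu)$, and then to estimate its cost by the triangle inequality on the product metric. Throughout I take the natural product distance on $M \times M$ to be $\bar d((x_1,x_2),(y_1,y_2)) = d(x_1,y_1) + d(x_2,y_2)$; the argument goes through verbatim if one instead uses the Riemannian product distance $\sqrt{d(x_1,y_1)^2+d(x_2,y_2)^2}$, since this is bounded above by $\bar d$.

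\textbf{Step 1: Build the coupling.} By Theorem \ref{McCann} (or more generally the existence of optimal plans for the Kantorovich problem with continuous cost on a compact space), pick $\pi \in \Pi(\mu,\nu)$ with $\int d(x,y)\,d\pi(x,y) = d_1(\mu,\nu)$. Consider the pushforward $\tilde \pi$ of the product measure $\pi \otimes \pi$ on $(M\times M)\times(M\times M)$ under the reordering map $((x_1,y_1),(x_2,y_2)) \mapsto ((x_1,x_2),(y_1,y_2))$. A direct check on the marginals shows $\tilde\pi \in \Pi(\mu\otimes\mu,\nu\otimes\nu)$.

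\textbf{Step 2: Estimate the cost.} By the definition of $d_1$ on $(M\times M,\bar d)$ and the construction of $\tilde\pi$,
\begin{align*}
d_1(\mu\otimes\mu,\nu\otimes\nu)
&\leq \int \bar d((x_1,x_2),(y_1,y_2))\,d\tilde\pi \\
&= \int\!\!\int \bigl[d(x_1,y_1) + d(x_2,y_2)\bigr]\,d\pi(x_1,y_1)\,d\pi(x_2,y_2) \\
&= 2 \int d(x,y)\,d\pi(x,y) = 2\, d_1(\mu,\nu),
\end{align*}
using Fubini and the fact that $\pi$ has total mass $1$.

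\textbf{Alternative and main obstacle.} The only real subtlety is fixing a convention for the product metric, since the statement is otherwise a one-line corollary of duality: applying the Kantorovich--Rubinstein theorem to any $1$-Lipschitz $\psi$ on $M\times M$ and splitting
$\int \psi\,d(\mu\otimes\mu-\nu\otimes\nu)$
as
$\int\!\bigl(\int\psi(x,y)\,d(\mu-\nu)(y)\bigr)d\mu(x) + \int\!\bigl(\int\psi(x,y)\,d(\mu-\nu)(x)\bigr)d\nu(y)$,
each inner integral is controlled by $d_1(\mu,\nu)$ since slices of $\psi$ are $1$-Lipschitz, yielding the same bound. I would present the coupling argument since it is constructive and does not require invoking a product version of Kantorovich--Rubinstein; the only care needed is the bookkeeping of the reordering map and making the product-metric convention explicit in a brief remark after the statement.
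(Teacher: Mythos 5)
Your coupling argument is correct and is essentially the paper's own proof: the paper likewise takes $\pi\otimes\pi$ as a coupling of $\mu\otimes\mu$ and $\nu\otimes\nu$, notes that the product cost splits as $d(x,y)+d(\tilde x,\tilde y)$, and concludes by taking the infimum (you instead pick an optimal $\pi$, which is an equivalent bookkeeping choice). Your explicit mention of the reordering map and the product-metric convention is a welcome clarification, but not a different method.
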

\begin{proof}
Note that if $\pi \in \Pi(\mu,\nu)$ then $\pi \otimes \pi \in \Pi( \mu \otimes \mu, \nu \otimes \nu)$ and note that 
$$ \int_M d(x,y) d\pi(x,y) + \int_M d(\tilde{x},\tilde{y}) d\pi(\tilde{x},\tilde{y}) = \int_{M \times M} d_{M \times M} ((x,\tilde{x}),(y,\tilde{y})) d \pi \otimes \pi (x,\tilde{x},y,\tilde{y}) $$
from which taking infimum on both sides yields the result. 
\end{proof}
\subsection{Evaluation of the limt}
In this section we prove Theorem \ref{existenceanduniqueness}, the goal is to show that the limiting measure path from Corollary \ref{corollaryexists} satisfies the continuity equation. The idea is to use the $d_2$ convergence together with the finite speed of propagation to ensure $ x \to W(x,y)$ is differentiable in the whole support of the measures at level $\tau$ so that we can differentiate inside of the convolution term in \eqref{aggregation}. 
\subsection{Proof of Theorem \ref{existenceanduniqueness}}
\begin{proof}
Assumptions \ref{assumptions} on $E_W$  ensure the hypothesis of Corollary \ref{corollaryexists} are satisfied in $(\mathcal{P}_2(M), d_2)$ which ensures the existence of a limiting path $ \{\mu (t) \}_{t \in [0,\infty)}$ for the family of geodesic interpolations  $\mathbf{Geo}_{\tau}(\{\mu_{k}^{\tau} \}) (t)$.\\
This interpolation satisfies the continuity equation with $v_t^{\tau}$ given as in Remark \eqref{Taylor}, by Lemma \ref{firstvar} we replace the vector field with $\nabla (W \ast \mu_{k+1}^{\tau})$ and finally  for $ f \in C^{\infty}_c([0,T]\times M)$ we aim to compute
$$ \lim_{\tau \to 0}\left( \underbrace{\int_0^{T} \int_M \partial_t f(x,t)d\mathbf{Geo}_{\tau}(\{\mu_{k}^{\tau} \}) (t)dt}_{(\romannumeral 1)} +\underbrace{\int_0^T \int_M \langle \nabla f(x,t), v_t^{\tau} \rangle_x d\mathbf{Geo}_{\tau}(\{\mu_{k}^{\tau} \}) (t)dt}_{(\romannumeral 2)} \right) $$
The limit of $(\romannumeral 1)$: notice that $ \partial_tf(x,t)$ is continuous on $x$ so by $\mathbf{Geo}_{\tau}(\{\mu_{k}^{\tau} \}) (t) \xrightarrow{d_2} \mu(t) $ as $ \tau \to 0$: 
 $$ \int_M \partial_t f(x,t) d\mathbf{Geo}_{\tau}(\{\mu_{k}^{\tau} \}) (t) \xrightarrow{\tau \to 0} \int_M \partial_t f(x,t) d\mu(t) \hspace{0.5cm } \forall t\in [0,T].$$
 Because $[0,T]$ is compact, the Dominated Convergence Theorem yields
 $$ \int_0^{T} \int_M \partial_t f(x,t) d\mathbf{Geo}_{\tau}(\{\mu_{k}^{\tau} \}) (t) dt \xrightarrow{\tau \to 0} \int_0^T \int_M \partial_t f(x,t) d\mu(t) dt. $$
To analyze $(\romannumeral 2)$, denote $T^{\tau,k}_t(x) = \exp_x \left(  \frac{((k+1)\tau - t)}{\tau} \nabla\phi_{k,k+1}^c(x)  \right)  $, with this notation $(T_t^{\tau,k})_{\#}\mu_{k+1}^{\tau} = \geo(\{\mu_{k}^{\tau}\})(t) $ and so by definition and using the observation of Lemma \ref{Taylor}, if $N_{\tau} = \lfloor T/\tau \rfloor $, 
\begin{align}\label{romannumeral2}
\begin{split}
& (\romannumeral 2) = \int_0^T \int_{M} \langle \nabla_M f(x,t), v_t^{\tau}(x) \rangle_x d\geo(t) dt \\ &= \sum_{k=0}^{N_{\tau}} \int_{k\tau}^{(k+1)\tau} \int_{M} \langle \nabla_M f(T_t^{\tau,k}(x),t), \Pi_{t,\gamma_{k,\tau}} \left( -\frac{\nabla \phi_{k,k+1}^c (x)}{\tau} \right) \rangle_{T_t^{\tau,k}(x)} d\mu_{k+1}^{\tau}(x) dt  \\
& \hspace{2 cm} - \int_{T}^{N_{\tau}+1} \int_{M} \langle \nabla_M f(T_t^{\tau,k}(x),t), \Pi_{t,\gamma_{k,\tau}} \left( -\frac{\nabla \phi_{k,k+1}^c (x)}{\tau} \right) \rangle_{T_t^{\tau,k}(x)} d\mu_{k+1}^{\tau}(x) dt \\
& = -  \sum_{k=0}^{N_{\tau}} \int_{k\tau}^{(k+1)\tau}\int_M \langle \Pi_{t,\gamma_{k,\tau}}^{-1}\left( \nabla_M f(T_t^{\tau,k}(x),t) \right), \frac{\nabla \phi_{k,k+1}^c(x)}{\tau} \rangle_x d\mu_{k+1}^{\tau}(x) dt \\
&\hspace{2 cm} + \int_{T}^{N_{\tau}+1}\int_M \langle \Pi_{t,\gamma_{k,\tau}}^{-1}\left( \nabla_M f(T_t^{\tau,k}(x),t) \right), \frac{\nabla \phi_{k,k+1}^c(x)}{\tau} \rangle_x d\mu_{k+1}^{\tau}(x) dt,
\end{split}
\end{align}
where as in Lemma \ref{Taylor} $\Pi_{t,\gamma_{k,\tau}}$ denotes parallel transport along the curve $\gamma_{k,\tau}: [\tau k, \tau(k+1)) \to M $ given by $  \gamma_{k,\tau}(t) = T_{t}^{\tau,k}$ which satisfies $\gamma_{k,\tau} ((k+1)\tau)  = x  $.  The second equality follows from the fact that parallel transport is an isometry.  \\
Focusing first in the inner most integral of the first term observe that by adding and subtracting the same term we can write
\begin{align}\label{sumzero}
\begin{split}
 &  \int_M \langle \Pi_{t,\gamma_{k,\tau}}^{-1}\left( \nabla_M f(T_t^{\tau,k}(x),t) \right), \frac{\nabla \phi_{k,k+1}^c(x)}{\tau} \rangle_x d\mu_{k+1}^{\tau}(x) \\
   & \hspace{1 cm} =  \underbrace{\int_M \langle \Pi_{t,\gamma_{k,\tau}}^{-1}\left( \nabla_M f(T_t^{\tau,k}(x),t) \right) - \nabla_Mf(x,t), \frac{\nabla \phi_{k,k+1}^c(x)}{\tau} \rangle_x d\mu_{k+1}^{\tau}(x)}_{A}  \\ & \hspace{1 cm} +   \underbrace{\int_M \langle  \nabla_Mf(x,t), \frac{\nabla \phi_{k,k+1}^c(x)}{\tau} \rangle_x d\mu_{k+1}^{\tau}(x)}_{B}.
   \end{split}
\end{align}
We start by obtaining uniform (on $\tau$) bounds for $A$, note that by Cauchy-Schwarz, for every $x \in \spt(\mu_{k+1}^{\tau})$
\begin{align*}
&\langle \Pi_{t,\gamma_{k,\tau}}^{-1}\left( \nabla_M f(T_t^{\tau,k}(x),t) \right) - \nabla_Mf(x,t), \frac{\nabla \phi_{k,k+1}^c(x)}{\tau} \rangle_x \\
\hspace{2cm } & \leq \lvert \Pi_{t,\gamma_{k,\tau}}^{-1}\left( \nabla_M f(T_t^{\tau,k}(x),t) \right) - \nabla_Mf(x,t) \rvert_x \bigg \lvert  \frac{\nabla \phi_{k,k+1}^c(x)}{\tau} \bigg \rvert_x . 
\end{align*}
But  the time derivative of $\Pi_{t,\gamma_{k,\tau}}^{-1}$ along integral curves gives the covariant derivative along $\dot{\gamma_{k,\tau}}$, so by Mean Value Theorem and Proposition \ref{finitespeed}, 
\begin{align*}
 \langle \Pi_{t,\gamma_{k,\tau}}^{-1} & \left( \nabla_M  f(T_t^{\tau,k}(x),t) \right)   - \nabla_M f(x,t), \frac{\nabla \phi_{k,k+1}^c(x)}{\tau} \rangle_x \\
\hspace{2cm } & \leq \  \sup_{\xi \in M} \lvert \nabla_{\dot{\gamma}_{k,\tau}}(\nabla_M f(\xi,t))\rvert_{\xi} ((k+1)\tau - t) \bigg \lvert  \frac{\nabla \phi_{k,k+1}^c(x)}{\tau} \bigg \rvert_x  \\
\hspace{2cm} & \leq \sup_{t\in [0,T]} \sup_{\xi \in M} \lvert \hess f_t \rvert_{\xi} \lvert \dot{\gamma}_{\tau,k}(\xi)\rvert_{\xi} ((k+1)\tau - t) \bigg \lvert  \frac{\nabla \phi_{k,k+1}^c(x)}{\tau} \bigg \rvert_x \\
& \leq ((k+1)\tau - t) L^2  \sup_{t \in [0,T]} \sup_{\xi \in M} \lvert \hess f_t \rvert_{\xi},
\end{align*}
where $f_t(\cdot) = f(\cdot,t)$ and in the last bound we used the fact that $\gamma_{k\tau}$ is a geodesic so the norm of it's tangent vector is constant, therefore bounded by Proposition \ref{finitespeed}. \\
With this bound in hand, going back to \eqref{sumzero} we find that 
\begin{align*}
 & \sum_{k=0}^{N_{\tau}} \int_{k\tau}^{(k+1)\tau}\int_M \langle  \Pi_{t,\gamma_{k,\tau}}^{-1}\left( \nabla_M f(T_t^{\tau,k}(x),t) \right) - \nabla_Mf(x,t), \frac{\nabla \phi_{k,k+1}^c(x)}{\tau} \rangle_x d\mu_{k+1}^{\tau}(x) dt \\
 & \leq  \sup_{t \in [0,T]} \sup_{\xi \in M} \lvert \hess f_t \rvert_{\xi} \:  L^2 \: \sum_{k=0}^{N_{\tau}} \int_{k\tau}^{(k+1)\tau}  ((k+1)\tau - t) dt \\
 & \leq \sup_{t \in [0,T]} \sup_{\xi \in M} \lvert \hess f_t \rvert_{\xi} \:  L^2 \: \left(\sum_{k=0}^{N_{\tau}} (k+1)\tau^2 - k\tau^2 - \frac{\tau^2}{2} \right) \\
 & = \frac{1}{2} \sup_{t \in [0,T]} \sup_{\xi \in M} \lvert \hess f_t\rvert_{\xi} \:  L^2 \: (N_{\tau}+1)\tau^2  = C  \:  (N_{\tau}+1)\tau^2.
\end{align*}
Because $ N_{\tau} \tau \to T$ as $ \tau \to 0$, this upper bound goes to $0$ as $\tau \to 0$, meaning that $A$ vanishes in the $\tau \to 0$ limit. \\
To study $B$ from \eqref{sumzero} note that using Lemma \ref{optimality}
\begin{align*}
   &  \int_M \langle  \nabla_Mf(x,t), \frac{\nabla \phi_{k,k+1}^c(x)}{\tau} \rangle_x d\mu_{k+1}^{\tau}(x) =  \int_{M} \langle \nabla_Mf(x,t), \nabla (W \ast \mu_{k+1}^{\tau})  \rangle_x d\mu_{k+1}^{\tau}(x).
\end{align*}
Hence our goal to finish the proof is to show that 
\begin{align} \label{divergenceterm}
 &\sum_{k=0}^{N_{\tau}} \int_{k \tau}^{(k+1)\tau} \int_M \langle \nabla_M f(x,s), \nabla_M (W \ast \mu_{k+1}^{\tau} \rangle_x d\mu_{k+1}^{\tau} ds \nonumber \\
 & \hspace{2 cm} \xrightarrow{\tau \to 0} \int_0^{T} \int_M \langle \nabla_M f(x,t), \nabla_M (W \ast \mu(s) ) \rangle_x d\mu(s) ds.
 \end{align}
 By density in the space of smooth functions we may assume without loss of generality that $ f(x,t) = \phi(x) a(t)$ from which we can change the order of integration in the left handside of \eqref{divergenceterm} and by mean value theorem for integrals to rewrite 
 \begin{align} \label{mvtsimplif}
 &\sum_{k=0}^{N_{\tau}} \int_{k \tau}^{(k+1)\tau} \int_M \langle \nabla_M f(x,s), \nabla_M (W \ast \mu_{k+1}^{\tau}) \rangle_x d\mu_{k+1}^{\tau} ds \nonumber \\
 & \hspace{2 cm} = \sum_{k=0}^{N_{\tau}} a(t^*_k) \int_M \langle \nabla_M \phi, \nabla_M (W \ast \mu_{k+1}^{\tau}) \rangle_x d\mu_{k+1}^{\tau},
 \end{align}
 where $t_k^{*} \in [k\tau),(k+1)\tau]$. 
 Observe that this limit concludes the proof as the extra term in \eqref{romannumeral2} vanishes in the $ \tau \to 0$ limit.\\ By Riemann integrability together with \eqref{mvtsimplif}, to establish the limit \eqref{divergenceterm} it is enough to show that 
 \begin{align}
  &   \sum_{k=0}^{N_{\tau}} a(t_k^*) \left( \int_M \langle \nabla_M \phi, \nabla_M (W \ast \mu_{k+1}^{\tau})\rangle_x d\mu_{k+1}^{\tau} \right. - \nonumber \\ & \hspace{3 cm}  \left. \int_M \langle \nabla_M \phi, \nabla_M (W \ast \mu (t_k^*))\rangle_xd\mu(t_k^*) \right) \to 0 .
 \end{align}
 Observe that by Fubini's theorem,  the term in parenthesis can be recast as 
 \begin{align*}
  &\int_M \langle \nabla_M \phi, \nabla_M (W \ast \mu_{k+1}^{\tau})\rangle_x d\mu_{k+1}^{\tau} - \int_M \langle \nabla_M \phi, \nabla_M (W \ast \mu (t_k^*))\rangle_xd\mu(t_k^*) \nonumber \\
  & =  \int_M \langle \nabla_M \phi, \nabla_M W (x,y) \rangle_x d (\mu_{k+1}^{\tau} \otimes \mu_{k+1}^{\tau}) - (\mu_{t_k^{*}} \otimes \mu_{t_k^{*}}) (x,y) \nonumber \\
 & \leq d_1(\mu_{k+1}^{\tau} \otimes \mu_{k+1}^{\tau}, \mu_{t_k^{*}} \otimes \mu_{t_k^{*}}) \leq d_1(\mu_{k+1}^{\tau}, \mu_{t_k^{*}})) \leq d_2(\mu_{k+1}^{\tau}, \mu_{t_k^{*}}),
 \end{align*}
 where the first inequality follows continuity of the integrand and the fact that $M \times M$ is compact together with the Kantorovich-Rubenstein Theorem (see \citep{McCannCordero}), the second inequality by Lemma \ref{contractd1}. Coming back to showing \eqref{divergenceterm}, we get 
 \begin{align*}
       &  \sum_{k=0}^{N_{\tau}} a(t_k^*) \left( \int_M \langle \nabla_M \phi, \nabla_M (W \ast \mu_{k+1}^{\tau})\rangle_x d\mu_{k+1}^{\tau} \right. - \nonumber \\ & \hspace{3 cm}  \left. \int_M \langle \nabla_M \phi, \nabla_M (W \ast \mu (t_k^*))\rangle_xd\mu(t_k^*) \right) \\ & \lesssim  \lvert \lvert a\rvert \rvert_{\infty}  \tau ( N_{\tau} + 1) \sup_{t \in [0,T]} d_2( \mu_{k+1}^{\tau},\mu(t)) \\ &\lesssim \tau ( N_{\tau} + 1) \sup_{t \in [0,T]} \{ d_2(\geo(\{\mu_{k}^{\tau})\}(t),\mu(t)) ) + d_2\geo(\{\mu_{k}^{\tau}\})(t),\mu_{k+1}^{\tau}) \},
 \end{align*}
 which shows that the sum in \eqref{divergenceterm} vanishes in the limit because $ \tau N_{\tau} \to 1$ together with the uniform limit \eqref{uniformont} and \eqref{abscontleveltau}.
\end{proof}
\section{Conclusions and extensions}
This work proved the existence of small time solutions for \eqref{aggregation} via the minimizing movement scheme under suitable conditions on the interaction potential. The assumption of dependence on Riemannian distance make it completely intrinsic and suitably general. A first line of investigation could be to derive long-time existence and geometry of solutions of the model for specific interaction potentials like power laws, for example. In this context, is it possible to reproduce the aggregation results from \citep{McCannCameron} in curved geometries? \\
Another interesting extension is the performance of numeric algorithms based on entropic optimal transportation compared to the usual PDE approximation methods. \\
The idea of using the minimizing movement scheme motivated from the seminal work \citep{Ambrosio} required an analysis of the optimality condition as there was no global $\lambda$-convexity of the functional. One way to avoid this problem is to restrict the geometry of the manifold to satisfy a non-negative cross-curvature condition which allows the set of optimal transport maps to be convex yielding $\lambda$-convexity. This approach enables the machinery of \citep{Ambrosio} which not only ensures existence and uniqueness but provides error bounds on the discrete approximation. \\
This work has shown that techniques from non-smooth analysis allow the Euler-Lagrange equation to imply enough regularity to solve the aggregation equation in small times. Further work should concentrate on the several possibilities: non-existence, non-uniqueness and concentration. The minimizing movement scheme is independent of the differentiability of the potential, giving us a candidate for a solution of the aggregation equation. The characterization of these possibilities is an open question in Riemannian manifolds, resolved for power laws in Euclidean setting in \citep{McCannCameron}.

\end{document}